\newtheorem{thm}{Theorem}
\newtheorem{lemma}{Lemma}
\newtheorem{prop}{Proposition}
\theoremstyle{definition}
\newtheorem{definition}{Definition}
\theoremstyle{remark}
\newtheorem*{rem*}{Remark}
\newtheorem{rem}{Remark}
\newcommand{\mr}{{\mathbb R}}
\newcommand{\mn}{{\mathbb N}}
\newcommand{\mz}{{\mathbb Z}}
\newcommand{\mc}{{\mathbb C}}
\renewcommand{\rho}{\varrho}
\newcommand{\eps}{\varepsilon}
\newcommand{\vphi}{\varphi}
\renewcommand{\Im}{\operatorname{Im}}
\renewcommand{\Re}{\operatorname{Re}}
\newcommand{\supp}{\operatorname{supp}}
\newcommand{\dist}{\operatorname{dist}}
\begin{document}

\title{On the distribution of eigenvalues of non-selfadjoint operators}

\author[M.Demuth, M.Hansmann, G.Katriel]{Michael Demuth, Marcel Hansmann, Guy Katriel}
\thanks{The third author is partially supported by the Humboldt Foundation (Germany).}

\address{TU Clausthal, \hspace{2pt} Institute of Mathematics,\hspace{2pt} Erzstr.1, \hspace{2pt} 38678 \hspace{2pt} Clausthal-Zellerfeld, Germany}
\email{demuth@math.tu-clausthal.de, hansmann@math.tu-clausthal.de, haggaika@yahoo.com}

\begin{abstract}
We prove quantitative bounds on the eigenvalues of non-selfadjoint
bounded and unbounded operators. We use the perturbation determinant
to reduce the problem to one of studying the zeroes of a
holomorphic function.
\end{abstract}

\maketitle

\section{Introduction}
The aim of this work is to obtain some quantitative results on the
structure of the discrete spectrum of wide classes of
non-selfadjoint linear operators. While the study of the eigenvalues of
selfadjoint operators is well-developed, much less is known in the
non-selfadjoint case, since many of the methods used to study the
discrete spectrum of selfadjoint operators, employing the
variational characterization of the eigenvalues and the fact that
the eigenvalues are real, do not apply in
the non-selfadjoint setting.\\

Our approach is based on constructing a holomorphic
function, in terms of perturbation determinants, whose zeroes are the
eigenvalues of the operator we are interested in, and using complex
analysis to obtain information on these zeroes, which in turn
translates into information on the eigenvalues. Variants of this
approach were used previously, e.g. in \cite{borichev,demuth}.\\

We develop results in the bounded and unbounded settings, each of
which is useful in applications to concrete operators.\\

In Section \ref{sec:bounded}, we assume that $A,B$ are
bounded linear operators in a complex Hilbert space, where $A$ is
selfadjoint with $\sigma(A)=[a,b]$  and $B-A$ belongs to the
Schatten class $\mathbf{S}_p, p>0$. The aim is to obtain
quantitative bounds on the discrete spectrum $\sigma_{disc}(B)$,
i.e. on isolated eigenvalues of finite algebraic multiplicity, in
terms of the $p$-Schatten norm of $B-A$. By the above assumptions,
the essential spectrum $\sigma_{ess}(B)=[a,b]$, and
$\sigma_{disc}(B)=\sigma(B)\cap (\mc \setminus [a,b])$ consists of a
sequence of eigenvalues which can only accumulate on $[a,b]$. Our
results quantify the rate at which this approach to the essential
spectrum must occur under the above assumptions. We prove that, for $\gamma>\max(1+p,2p)$,
\begin{equation}\label{first}
 \sum_{\lambda \in \sigma_{disc}(B)}
 \frac{\dist(\lambda,[a,b])^\gamma}{|\lambda-a|^{\frac \gamma 2}|\lambda-b|^{\frac \gamma 2 }}
 \leq  C \| B-A \|_{\mathbf{S}_p}^p,
\end{equation}
where the constant $C$, which is given explicitly, depends only on
$p$ and on $\gamma$.\\

In Section \ref{sec:unboundedI}, we assume that $H_0$, $H$
are (unbounded) closed operators in a complex Hilbert space,
$H_0$ is selfadjoint with $\sigma(H_0)=[0,\infty)$ and the
resolvent difference $R_s=(s-H)^{-1}-(s-H_0)^{-1}$ is in
$\mathbf{S}_p$ for some $s<0$. The eigenvalues of $H$  may
accumulate on $\sigma_{ess}(H)=[0,\infty)$, or at infinity. We
obtain quantitative information on the rate of accumulation given by
the following inequality: for $\gamma>\max(1+p,2p)$,
\begin{equation}\label{second}
\sum_{\lambda \in \sigma_{disc}(H)} \frac{
\dist(\lambda,[0,\infty))^\gamma }{|\lambda|^{\frac \gamma 2}
(1+|\lambda|)^{\gamma}}  \leq C\|R_s\|_{\mathbf{S}_p}^p,
\end{equation}
 where the constant $C$ depends only on $p,\gamma,$ and $s$.\\

The results for unbounded operators above are proved by reduction to
the case of bounded operators, so that the results in the bounded
case are fundamental. Our results for bounded operators are in the
same spirit as the results of Borichev, Golinskii, and Kupin
\cite{borichev}. A chief difference is that while in the proof of
their results, the above authors used a new result in complex
analysis about zeroes of certain holomorphic functions, whose proof
is quite involved, we use a result which is directly derived from
Jensen's identity (see Section \ref{sec:jensen}). As will be discussed
in Remark \ref{rem:rem}, our complex analysis result is sometimes weaker
than that of \cite{borichev}, but has the advantage
that it enables us to derive explicit expressions for the constants
involved. This allows us to derive the explicit form of the constant $C$
in (\ref{first}) and (\ref{second}), and their dependence on the parameters
$\gamma$ and $p$.\\

Some results for unbounded operators $H$, in the case that $H$
is selfadjoint and lower semi-bounded, were obtained previously in \cite{demuth}, by the
perturbation-determinant method. These bounds were given in terms of
Schatten norms of the semigroup difference $e^{-tH}-e^{-tH_0}$. In
\cite{hansmann} it was shown that inequalities {\it{stronger}} than
those obtained in \cite{demuth} can be proven by a completely
different method, based on the spectral shift function (and thus
restricted to selfadjoint operators). Here we see that the
perturbation-determinant method finds its natural place in the study
of non-selfadjoint operators. In this work we develop results in
terms of the resolvent difference rather than the semigroup
difference, but we note that it is also possible to derive results
in terms
of semigroup differences, using the same procedure of reduction to the bounded case.\\

In the final section of this paper, we construct a counterexample
which demonstrates that our results are sharp in a certain sense.

\section{Preliminaries}\label{sec:prelim}
For a seperable complex Hilbert space $\mathcal{H}$ let  $\mathbf{C}(\mathcal{H})$ and $\mathbf{B}(\mathcal{H})$ denote the closed and bounded linear operators on $\mathcal{H}$ respectively. We denote the ideal of all compact operators on $\mathcal{H}$ by $\mathbf{S}_\infty$ and the ideal of all Schatten class operators by $\mathbf{S}_p, p > 0$, i.e. a compact operator $C \in \mathbf{S}_p$ if
\begin{equation}
  \| C\|_{\mathbf{S}_p}^p = \sum_{n=0}^\infty \mu_n(C)^p < \infty
\end{equation}
where $\mu_n(C)$ denotes the $n$-th singular value of $C$. Suppose
that $A, B \in \mathbf{B}(\mathcal{H})$
 and $B-A \in \mathbf{S}_p$ for some real $p>0$. Since $B-A \in \mathbf{S}_{\lceil p \rceil}$, where
\[ \lceil p \rceil = \min \{ n \in \mn :  n \geq p \},\] the  $\lceil p \rceil$-regularized
perturbation-determinant of $A$ by $B-A$ is well defined as
\begin{equation}\label{eq:per_det}
 h_{A,B}^{(p)}(z)
={\det}_{\lceil p \rceil}(I-(z-A)^{-1}(B-A)),
\end{equation}
and is analytic on $\rho(A)$, the resolvent set of $A$. Furthermore,
$z_0 \in \rho(A)$ is an eigenvalue of $B$ of algebraic multiplicity
$k_0$ if and only if $z_0$ is a zero of $h_{A,B}^{(p)}$ of the same
multiplicity.
 For more information on  regularized perturbation-determinants we refer to the books by Dunford and Schwartz
 \cite{Dunford}, Gohberg and Krein \cite{Gohberg} or Simon \cite{simonb}. Note that
\begin{equation}\label{eq:infty}
\lim_{z \to \infty} h_{A,B}^{(p)}(z)=1
\end{equation} and
\begin{equation}
  \label{eq:estimate_dunford}
  |h_{A,B}^{(p)}(z)| \leq \exp \left( \Gamma_p \|(z-A)^{-1}(B-A) \|_{\mathbf{S}_p}^p\right),
\end{equation}
where $\Gamma_p $ is some positive constant, see \cite[page 1106]{Dunford}. We remark that
$\Gamma_p= \frac{1}{p}$ for $p \leq 1$, which is a direct consequence of the definition
of the determinant, $\Gamma_2=\frac 1 2$ and $\Gamma_p \leq e(2+ \log p)$ in general, see \cite[Simon]{Simon77}.
\begin{rem}\label{rem:dav}
Assuming $B-A \in \mathbf{S}_\infty$, we have
$\sigma_{ess}(A)=\sigma_{ess}(B)$. Let $G$ be the
unbounded component of $\mc \setminus \sigma_{ess}(A)$. Then $G \cap
\sigma(B) \subset \sigma_{disc}(B)$ and all possible limit points of
this set lie in $\sigma_{ess}(A)$. Here the discrete spectrum
$\sigma_{disc}(B)$ consists of those eigenvalues of $B$ that
 have finite algebraic multiplicity and are isolated from the rest of the spectrum.
 As a general reference for the mentioned definitions and results we refer to the
 book of Davies \cite[Chapter 4.3]{Davies07}, see Theorem 4.3.18 in particular.
\end{rem}
\section{Some complex analysis results}\label{sec:jensen}
We have seen that the zeroes of the analytic function
$h_{A,B}^{(p)}$ play an important role in the study of the
eigenvalues of $B$ in $\mc \setminus \sigma(A)$. In the following,
we use Jensen's identity to obtain some general results on the
distribution of zeroes of functions analytic in the open unit disk
$U$.
\begin{lemma}\label{lem:compl1}
  Let $\vphi \in C^2(0,1)$ be a nonnegative, nonincreasing function with
  \begin{equation}
\lim_{r \to 1} \vphi(r)= \lim_{r \to 1} \vphi'(r)=0
  \end{equation}
that obeys
\begin{equation} \label{eq:rv1}
  \supp \left( [ r\vphi'(r) ]' \right)_- \subset [0,1)
\quad \text{and} \quad \sup_{r \in (0,1)} \left( [ r\vphi'(r)
]' \right)_- < \infty,
\end{equation}
where $f_- = -\min(f,0)$ is the negative part of a function $f$. Let $h: U \to \mc$ be an analytic function with $h(0)=1$. Then
\begin{eqnarray}
  \sum_{z \in U,\; h(z)=0} \vphi(|z|) &=& \frac 1 {2\pi} \int_0^1 dr \:
  [ r\vphi'(r) ]'   \int_0^{2\pi} d\theta \log |h(re^{i\theta})| \label{eq:compl1}
\end{eqnarray}
where in the sum each zero of $h$ is counted according to its
multiplicity.
\end{lemma}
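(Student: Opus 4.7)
The plan is to derive the identity from Jensen's formula by integrating by parts twice in the radial variable, transferring the operator $[r\,\cdot\,]'$ from the zero-counting function to the mean $J(r):=\frac{1}{2\pi}\int_0^{2\pi}\log|h(re^{i\theta})|\,d\theta$. With $n(t) := \#\{z \in U : |z| \le t,\ h(z)=0\}$ counted with multiplicity, Jensen's identity (using $h(0)=1$) reads
\[
J(r) = \int_0^r \frac{n(t)}{t}\,dt,
\]
so that $J(0^+)=0$, $J$ is nonnegative and nondecreasing, and $n(r) = rJ'(r)$ almost everywhere.

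I would first integrate by parts against the step function $n$, using $\lim_{r\to 1}\varphi(r)=0$:
\[
\sum_{h(z)=0}\varphi(|z|) = \int_{[0,1)}\varphi(r)\,dn(r) = -\int_0^1 \varphi'(r)\,n(r)\,dr = -\int_0^1 r\varphi'(r)\,J'(r)\,dr.
\]
A second integration by parts with $u = r\varphi'(r)$, $dv = J'(r)\,dr$ should annihilate both boundary terms, the one at $r=0$ because $J(0^+)=0$ and the one at $r=1$ because $\varphi'(r)\to 0$, leaving
\[
\int_0^1 [r\varphi'(r)]'\,J(r)\,dr.
\]
Expanding the definition of $J$ and swapping the order of integration by Fubini then gives the right-hand side of (\ref{eq:compl1}).

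I expect the main obstacle to be the rigorous justification of this second integration by parts and of the Fubini step. Since $h$ is only assumed analytic in $U$, the mean $J(r)$ may grow without bound as $r\to 1^-$, so $[r\varphi'(r)]'\,J(r)$ is a priori neither absolutely integrable nor sign-definite. This is exactly where hypothesis (\ref{eq:rv1}) enters: the bound $\sup_{(0,1)}([r\varphi'(r)]')_- < \infty$ together with $\supp([r\varphi'(r)]')_- \subset [0,1)$ confines the negative part of the integrand to a compact subset of $[0,1)$ on which $J$ is continuous, hence bounded, so the negative part is absolutely integrable. I would therefore first execute both integrations by parts on $[0,1-\eps]$, where everything is classical, and then let $\eps\to 0$: monotone convergence controls the nonnegative contribution (with both sides of (\ref{eq:compl1}) being $+\infty$ together if necessary), dominated convergence handles the negative contribution, and the decay $\varphi'(r)\to 0$ combined with a crude bound on $J$ obtained from the previous one-parameter identity makes the boundary contribution $r\varphi'(r)J(r)$ at $r=1-\eps$ vanish in the limit.
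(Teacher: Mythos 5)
Your plan passes through the same intermediate quantity as the paper, namely
\[
-\int_0^1 \vphi'(r)\, n(r)\, dr ,
\]
but you reach it by two successive integrations by parts, whereas the paper reaches it from the other side by multiplying Jensen's identity by $[r\vphi'(r)]'$, applying Fubini--Tonelli (justified by (\ref{eq:rv1})), and then invoking the layer-cake lemma (Lemma~\ref{lem:layer_cake}). The paper's direction has the structural advantage that no boundary term at $r\to 1$ ever appears: the identity $\int_s^1 [r\vphi'(r)]'\,dr = -s\vphi'(s)$ absorbs the limit condition on $\vphi'$, and the layer cake step yields the sum--integral equality with no side conditions, valid in the extended sense $\infty=\infty$.

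The gap in your version is precisely at the second boundary term, $r\vphi'(r)J(r)$ at $r=1-\eps$. You argue that ``the decay $\vphi'(r)\to 0$ combined with a crude bound on $J$'' makes it vanish. That is not enough, for two reasons. First, the crude bound one actually gets from the intermediate identity is of the form $(-\vphi'(r))J(r) \leq C + o(1)$; that gives boundedness, not vanishing, and a bounded nonzero limit of the boundary term would already spoil the equality. Second, the boundary term genuinely need \emph{not} vanish: take $\vphi(r)=\tfrac12(1-r)^2$ (which satisfies all the hypotheses) and an $h$ with $J(r)\sim c/(1-r)$; then $-r\vphi'(r)J(r)\to c\neq 0$. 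In that example both sides of (\ref{eq:compl1}) are $+\infty$, so the lemma still holds, but your proposed justification is false as a general statement. To repair the argument in your direction one needs a case split. When $\sum \vphi(|z|)<\infty$, use that for $r$ close to $1$ one has $[t\vphi'(t)]'\geq 0$ on $[r,1)$ (by the compact-support hypothesis on the negative part) and $J$ is nondecreasing, so
\[
-r\vphi'(r)\,J(r) = J(r)\int_r^1 [t\vphi'(t)]'\,dt \ \leq\ \int_r^1 J(t)\,[t\vphi'(t)]'\,dt ,
\]
which is the tail of a convergent integral and hence tends to $0$; this is a different and stronger estimate than ``bounded $J$ times vanishing $\vphi'$.'' When $\sum\vphi(|z|)=\infty$, one argues directly that $\int_0^1[r\vphi'(r)]'J(r)\,dr=+\infty$. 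With these two additions your route closes, but as written the boundary step does not follow from the reasons you give.
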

\begin{rem}
A short calculation shows that $\vphi_1(r)=|\log(r)|^\gamma$, $\vphi_2(r)=(1-r)^\gamma$ and $\vphi_3(r)=(r^{-1}-r)^\gamma$ fulfill the above assumptions in case that $\gamma > 1$.
\end{rem}
\begin{proof}
Jensen's identity states that
\begin{eqnarray}\label{eq:Jensen}
0 \leq \int_0^r ds \: s^{-1} n(h;U_s) = \frac{1}{2\pi}\int_0^{2\pi} d\theta \: \log| h(re^{i\theta})|, \quad 0< r < 1
\end{eqnarray}
where $n(h ; U_s)$ counts the number of zeroes of $h$ (including
multiplicities) in the disk of radius $s$, see e.g. Rudin \cite[page
$307$]{Rudin}. Multiplying both sides of (\ref{eq:Jensen}) by $[
r\vphi'(r) ]' $ and integrating over $r \in [0,1]$ leads to
 \begin{eqnarray}
&&  \frac{1}{2\pi} \int_0^1 dr\: [ r\vphi'(r) ]'  \int_0^{2\pi} d \theta\: \log | h(re^{i\theta})|  \nonumber\\
&=& \int_0^1 dr \: [ r\vphi'(r) ]'  \int_0^r ds \: s^{-1} n(h;U_s) \nonumber\\
&\overset{(\star)}{=}& \int_0^1 ds \: s^{-1} n(h ;U_s) \int_s^1 dr \:[ r\vphi'(r) ]'  = - \int_0^1 ds \: \vphi'(s)  n(h; U_s) \nonumber \\
&=& \int_0^\infty dt \:  \left[ \frac d {dt} \vphi(e^{-t}) \right]
n(h; U_{e^{-t}}). \label{eq:lem_compl1_id}
 \end{eqnarray}
The application of Fubini's theorem in ($\star$) is allowed by assumption (\ref{eq:rv1}). We recall the layer cake representation,
see Lieb and Loss \cite[Theorem 1.13]{Lieb}.
\begin{lemma}\label{lem:layer_cake}
  Let $\nu$ be a Borel measure on $\mr_+$ such that
  \begin{equation*}
    \Psi(t)=\nu([0,t))
  \end{equation*}
is finite for every $t>0$. Then for any Borel measure $\mu$ on $\mc$ and any $\mu$-measureable nonnegative function $f$
\begin{equation*}
  \int_\mc \Psi(f(z)) \mu(dz) = \int_0^\infty \: \mu(\{ z : f(z) > t\}) \nu(dt).
\end{equation*}
\end{lemma}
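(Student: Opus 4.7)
The plan is to prove the identity by rewriting $\Psi(f(z))$ as an integral of an indicator, then swapping the order of integration via Tonelli's theorem. Since both measures are $\sigma$-finite on the relevant slabs (the hypothesis that $\Psi(t) = \nu([0,t))$ is finite for every $t > 0$ makes $\nu$ $\sigma$-finite on $\mr_+$, and nonnegativity of the integrand removes any integrability issue), this is the natural route.

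First I would note the pointwise identity
\begin{equation*}
\Psi(f(z)) = \nu([0,f(z))) = \int_0^\infty \mathbf{1}_{\{t < f(z)\}}(t) \: \nu(dt),
\end{equation*}
valid for every $z$ with $f(z) \in [0,\infty]$, with the usual convention that the integral equals $\nu([0,\infty)) \leq \infty$ when $f(z) = \infty$. Inserting this into the left-hand side and applying Tonelli's theorem on the product space $(\mc \times \mr_+, \mu \otimes \nu)$ to the nonnegative measurable integrand $(z,t) \mapsto \mathbf{1}_{\{t < f(z)\}}$ gives
\begin{equation*}
\int_\mc \Psi(f(z)) \: \mu(dz) = \int_0^\infty \int_\mc \mathbf{1}_{\{f(z) > t\}} \: \mu(dz) \: \nu(dt) = \int_0^\infty \mu(\{z : f(z) > t\}) \: \nu(dt),
\end{equation*}
which is the claim.

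The only subtle point, and the one I would treat most carefully, is joint measurability of $(z,t) \mapsto \mathbf{1}_{\{t < f(z)\}}$. This set equals $\{(z,t) : f(z) - t > 0\}$, which is the preimage of $(0,\infty]$ under the Borel measurable map $(z,t) \mapsto f(z) - t$ (with the natural extension to $[-\infty,\infty]$ if $f$ can take the value $+\infty$). Hence it lies in the product $\sigma$-algebra, and Tonelli applies. No analytic subtleties beyond this arise, so I do not expect a serious obstacle; the argument is essentially bookkeeping with nonnegative integrands.
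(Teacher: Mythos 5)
The paper does not prove this lemma; it quotes it verbatim from Lieb and Loss, Theorem 1.13, so there is no in-paper argument to match yours against. Your proof --- writing $\Psi(f(z)) = \int_0^\infty \mathbf{1}_{\{t < f(z)\}}\,\nu(dt)$, inserting this into the left-hand side, and swapping the order of integration by Tonelli --- is the standard textbook proof of the layer cake representation, and its core is correct. Your treatment of joint measurability of $(z,t)\mapsto \mathbf{1}_{\{t<f(z)\}}$ is also the right thing to check.

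One gap worth closing: Tonelli's theorem requires \emph{both} factor measures to be $\sigma$-finite, and you only verify this for $\nu$ (correctly, from $\Psi(t)<\infty$ for all $t$). The lemma as stated allows $\mu$ to be an arbitrary Borel measure on $\mc$, which need not be $\sigma$-finite, so the Tonelli step does not literally apply. In the paper's application $\mu$ is a counting measure supported on the (countable) zero set of an analytic function in $U$, hence $\sigma$-finite, so nothing is lost there; but for the lemma in the generality it is stated, you should either add the $\sigma$-finiteness hypothesis on $\mu$ (as Lieb--Loss do) or replace Tonelli with an elementary approximation: verify the identity directly for nonnegative simple $f=\sum_i a_i \mathbf{1}_{A_i}$ with disjoint $A_i$, where both sides reduce to the finite sum $\sum_i \Psi(a_i)\,\mu(A_i)$, and then pass to general nonnegative measurable $f$ via monotone convergence, using that $\{f>t\}=\bigcup_n\{f_n>t\}$ for $f_n\uparrow f$ together with the left-continuity of $\Psi$. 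Either fix is routine; the central idea of your argument is sound.
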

Applying the layer cake representation to the point measure
\begin{equation*}
  \mu_h(\{z\}) = \left\lbrace
    \begin{array}{cl}
      m(h;z) &, h(z)=0, \;z \in U \\
      0 &, \text{else}
    \end{array}\right.
\end{equation*}
where $m(h;z)$ counts the multiplicity of a zero $z$ of $h$, and to the measure
\begin{equation*}
  \nu_\vphi(dt)= \left[ \frac d {dt} \vphi(e^{-t}) \right] dt
\end{equation*}
we can reformulate the RHS of (\ref{eq:lem_compl1_id}) as follows
\begin{eqnarray}
&& \int_0^\infty dt \: \left[ \frac d {dt} \vphi(e^{-t}) \right] n(h; U_{e^{-t}})\nonumber
\\ &=& \int_0^\infty \nu_\vphi(dt) \:  \mu_h(\{ z :  -\log|z| > t \}) = \int_\mc \nu_\vphi([0,-\log|z|)) \mu_h(dz) \nonumber \\
&=& \sum_{z \in U,\; h(z)=0} \int_0^{-\log|z|} \: dt \: \left[ \frac
d {dt} \vphi(e^{-t}) \right] = \sum_{z \in U,\; h(z)=0} \vphi(|z|).
\nonumber
\end{eqnarray}
Now (\ref{eq:lem_compl1_id}) yields the result.
\end{proof}
In order to find conditions on $h$ and $\vphi$ that ensure the
RHS of (\ref{eq:compl1}) to be finite, we introduce the
following class of functions.
\begin{definition}
  Let $M(E,\alpha,\beta)$ denote the set of all functions $m : U \to \mr_+$ that obey an estimate of the form
  \begin{equation}\label{eq:m}
    m(z) \leq \frac{C_0}{(1-|z|)^\alpha \dist(z,E)^\beta }
  \end{equation}
where $C_0 > 0$ and $E \subset \partial U$ is any finite subset of the unit circle.
\end{definition}
We present a result on the finiteness of the RHS of
(\ref{eq:compl1}) in case that $\log |h(z)| \in M(E,\alpha,\beta)$
for some $\alpha,\beta\geq 0$. We do not try to present the most
general result in terms of the function $\vphi$ but will restrict
ourselves to one particular choice, namely $\vphi(r)=(1-r)^\gamma,
\gamma > 1$.
\begin{lemma}\label{lem:com2}
Let $m\in M(E,\alpha,\beta)$ for some $\alpha,\beta \geq 0$ and some finite $E\subset \partial U$. Let $h
: U \to \mc$ be analytic with $h(0)=1$ and
\begin{equation}\label{eq:dew}
  |h(z)| \leq \exp(m(z)).
\end{equation}
Then for every $\gamma > \max(1+\alpha, \alpha + \beta)$
\begin{equation}\label{eq:c2}
\sum_{z \in U,\; h(z)=0} (1-|z|)^\gamma \leq C_{\gamma}(m)
\end{equation}
where each zero of $h$ is counted according to its multiplicity and
\begin{equation}\label{eq:cg1}
C_\gamma(m) =  \frac{\gamma}{2\pi} \int_{\frac 1 {{\gamma}}}^1 dr
\int_0^{2\pi} d\theta \:  \frac{(r\gamma-1)}{(1-r)^{2-\gamma}}
m(re^{i\theta})
\end{equation}
is a finite constant.
\end{lemma}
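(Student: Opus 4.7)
The plan is to apply Lemma \ref{lem:compl1} with the specific choice $\vphi(r)=(1-r)^\gamma$. The remark following Lemma \ref{lem:compl1} ensures the required $C^2$, monotonicity, boundary, and support/boundedness conditions for this $\vphi$ as soon as $\gamma>1$. A direct computation gives
\[
[r\vphi'(r)]' \;=\; \gamma (1-r)^{\gamma-2}(\gamma r - 1),
\]
which is non-positive on $(0,1/\gamma)$ and non-negative on $(1/\gamma,1)$. This sign split is the key structural observation: the two regions must be treated differently.

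First, I would apply Lemma \ref{lem:compl1} to obtain the identity
\[
\sum_{z\in U,\, h(z)=0}(1-|z|)^\gamma \;=\; \frac{1}{2\pi}\int_0^1 dr\, [r\vphi'(r)]' \int_0^{2\pi} d\theta\, \log|h(re^{i\theta})|.
\]
The inner angular integral equals $2\pi \int_0^r s^{-1}n(h;U_s)\,ds$ by Jensen's identity (\ref{eq:Jensen}), hence is non-negative for every $r\in(0,1)$. Therefore the contribution from $r\in(0,1/\gamma)$, where $[r\vphi'(r)]'\leq 0$, is non-positive and can be discarded to obtain an upper bound. On the remaining interval $r\in(1/\gamma,1)$, $[r\vphi'(r)]'\geq 0$, so the hypothesis $\log|h(z)|\leq m(z)$ may be inserted pointwise. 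Substituting the explicit form of $[r\vphi'(r)]'$ produces exactly the integral expression (\ref{eq:cg1}) defining $C_\gamma(m)$.

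It remains to check that $C_\gamma(m)<\infty$ under $\gamma>\max(1+\alpha,\alpha+\beta)$. Using (\ref{eq:m}), the integrand is bounded (up to $C_0$) by
\[
(1-r)^{\gamma-2-\alpha}(\gamma r -1)\int_0^{2\pi}\frac{d\theta}{\dist(re^{i\theta},E)^\beta}.
\]
For each $e^{i\theta_0}\in E$ one has $\dist(re^{i\theta},e^{i\theta_0})^2\asymp (1-r)^2+(\theta-\theta_0)^2$, so a standard estimate of the angular integral yields a bound of order $(1-r)^{1-\beta}$ when $\beta>1$, of logarithmic order when $\beta=1$, and $O(1)$ when $\beta<1$; the finiteness of $E$ lets me handle each singular angle separately. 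The radial integral near $r=1$ is then controlled by $\int_{\cdot}^1 (1-r)^{\gamma-1-\alpha-\beta}\,dr$ in the first case (finite iff $\gamma>\alpha+\beta$) and by $\int_{\cdot}^1 (1-r)^{\gamma-2-\alpha}\,dr$ in the remaining cases (finite iff $\gamma>1+\alpha$), combining to the stated hypothesis $\gamma>\max(1+\alpha,\alpha+\beta)$.

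The main obstacle is purely the bookkeeping of signs: because $[r\vphi'(r)]'$ changes sign at $r=1/\gamma$, one cannot simply bound $\log|h|$ by $m$ in a single step throughout $(0,1)$. Separating the two regimes and invoking Jensen's non-negativity to discard the $(0,1/\gamma)$ piece is what makes the upper bound quantitative and explicit; the subsequent convergence analysis of (\ref{eq:cg1}) is routine once the angular singularities at points of $E$ are estimated carefully.
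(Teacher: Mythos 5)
Your proof is correct and follows essentially the same route as the paper: apply Lemma \ref{lem:compl1} with $\vphi(r)=(1-r)^\gamma$, use the sign change of $[r\vphi'(r)]'=\gamma(1-r)^{\gamma-2}(\gamma r-1)$ at $r=1/\gamma$ together with Jensen's non-negativity of the angular integral to discard the contribution from $(0,1/\gamma)$, insert the bound $\log|h|\leq m$ on $(1/\gamma,1)$, and then verify finiteness of $C_\gamma(m)$ by splitting the angular integral into neighborhoods of the points of $E$ and the complement, with the same three-case ($\beta>1$, $\beta=1$, $\beta<1$) estimate yielding the condition $\gamma>\max(1+\alpha,\alpha+\beta)$. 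There is no substantive difference from the paper's argument.
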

\begin{rem}\label{rem:rem}
  In \cite{borichev} it has been shown that condition (\ref{eq:dew}) implies that for every $\eps>0$
  \begin{equation}\label{fd}
   \sum_{z \in U,\; h(z)=0} (1-|z|)^{\alpha+1+\eps} \dist(z,E)^{(\beta-1+\eps)_+} \leq C,
  \end{equation}
where $C$ depends on $\alpha, \beta, \eps$ and $m$ in a way that is
not made explicit. In case that $\beta < 1$ the finiteness of the
LHS of (\ref{fd}) for every $\eps>0$ is equivalent to the finiteness of the LHS of
(\ref{eq:c2}) for every $\gamma > 1+\alpha$, whereas in case that $\beta \geq 1$ the finiteness of
(\ref{fd}) implies the finiteness of (\ref{eq:c2}), but not vice
versa. In this respect, the result obtained in \cite{borichev} is
stronger than ours.
\end{rem}

\begin{proof}
For $\gamma > 1$ let $\vphi(r)=(1-r)^\gamma$. Since
\[ [ r\vphi'(r) ]'  = \gamma  (1-r)^{\gamma-2} (r\gamma -1)\] we obtain from (\ref{eq:compl1})
and our assumptions, using the non-negativity of $\int_0^{2\pi}\log
|h(re^{i\theta})| d\theta$, which follows from (\ref{eq:Jensen}),
 \begin{eqnarray}
   \sum_{z \in U,\; h(z)=0} (1-|z|)^\gamma &=& \frac {\gamma} {2\pi} \int_0^1 dr \frac{(r\gamma-1)}{(1-r)^{2-\gamma}} \int_0^{2\pi} d\theta \log |h(re^{i\theta})| \nonumber \\
&\leq& \frac {\gamma}{2\pi} \int_{1/{\gamma}}^1 dr \frac{(r\gamma-1)}{(1-r)^{2-\gamma}}   \int_0^{2\pi} {d\theta} \: m(re^{i\theta}) \nonumber \\
&\leq& \frac {C_0\gamma(\gamma-1)}
{2\pi} \int_{1/{\gamma}}^1 dr \frac{1}{(1-r)^{2-\gamma+\alpha}}
\int_0^{2\pi} \frac{d\theta}{\dist(re^{i\theta},E)^\beta}. \label{eq:compl2_int}
 \end{eqnarray}
It remains to show that the integral on the RHS of
(\ref{eq:compl2_int}) is finite whenever $\gamma >
\max(1+\alpha,\alpha +\beta)$. To this end we denote $E=\{
e^{i\theta_1}, \ldots , e^{i\theta_n} \}$ where $0 \leq \theta_1 <
\ldots < \theta_n < 2\pi$. Let $$\delta=\frac 1 4 \min_{1\leq k\leq
n} |e^{i\theta_{k+1}} - e^{i\theta_k}| \quad
,\theta_{n+1}:=\theta_1.$$ We further define
\begin{eqnarray*}
  G_k= \{ \theta \in [0,2\pi]: |e^{i\theta}-e^{i\theta_k}|<\delta\}, &  k=1,\ldots,n.
\end{eqnarray*}
Since for $r \geq 0$
$$\sup_{\theta \notin \cup_k G_k} \dist(re^{i\theta},E) \geq C>0, $$ the integral
\begin{eqnarray*}
   \int_{1/{\gamma} }^1 dr  \frac{1}{(1-r)^{2-\gamma+\alpha}} \int_{ \theta \notin \cup_k G_k} \frac{d\theta}{\dist(re^{i\theta},E)^\beta}
\end{eqnarray*}
is finite whenever $\gamma > \alpha +1$. It remains to show the
finiteness of
\begin{eqnarray}\label{eq:compl2_g}
   \int_{1/{\gamma} }^1 dr \frac{1}{(1-r)^{2-\gamma+\alpha}}  \int_{\cup_k G_k} \frac{d\theta}{\dist(re^{i\theta},E)^\beta}.
\end{eqnarray}
But for $\theta \in G_k$
\begin{equation*}
  \dist(re^{i\theta},E)= |re^{i\theta}-e^{i \theta_k}|
\end{equation*}
and hence
\begin{eqnarray}
  \int_{\cup_k G_k} \frac{d\theta}{\dist(re^{i\theta},E)^\beta} &=&
  \sum_k \int_{G_k} \frac{d\theta}{|re^{i\theta}-e^{i\theta_k}|^\beta}. \label{eq:compl2_r1}
\end{eqnarray}
It is not difficult to show that as $r \to 1$
\begin{eqnarray}\label{eq:compl2_r2}
  \int_{G_k} \frac{d\theta}{|re^{i\theta}-e^{i\theta_k}|^\beta} = \left\{
    \begin{array}{cl}
      O\left( \frac 1 {(1-r)^{\beta-1}} \right), & \beta > 1 \\[4pt]
      O\left( -\log(1-r) \right), & \beta = 1 \\[4pt]
      O\left(1 \right), & \beta < 1. \\[4pt]
    \end{array}\right.
\end{eqnarray}
We skip the elementary but technical calculation. Estimates
(\ref{eq:compl2_r1}) and (\ref{eq:compl2_r2}) show that the integral
in (\ref{eq:compl2_g}) is finite whenever $\gamma >
\max(1+\alpha,\alpha + \beta)$.
\end{proof}
\begin{rem}
In case that $m \in M(E,0,\beta)$ for $\beta < 1$, the above proof
actually shows that $h$ is element of the Nevanlinna class, i.e.
\[ \sup_{0<r<1} \int_0^{2\pi} \log_+|h(re^{i\theta})| \: d\theta < \infty.\]
As is well-known this implies the stronger result that $\sum_{z \in
U,\; h(z)=0} (1-|z|) < \infty$, see e.g. Rudin \cite[page 311]{Rudin}.
\end{rem}
We conclude this section with the classical Koebe distortion
theorem, which will be used later.
\begin{thm}\label{thm:koebe}
  Let $f : U \to \mc$ be conformal. Then
  \begin{equation*}\label{lem:koebe}
    \frac 1 4 |f'(z)| (1-|z|^2) \leq \dist(f(z),\partial f(U)) \leq |f'(z)| (1-|z|^2), \quad z \in U.
  \end{equation*}
\end{thm}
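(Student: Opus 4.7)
The plan is to reduce to the case $z=0$ via conformal invariance, and then derive the normalized bounds from the Schwarz lemma (upper bound) and the Koebe one-quarter theorem (lower bound).

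First, for fixed $z \in U$ I would introduce the disk automorphism $\phi_z(w) = (w+z)/(1+\bar{z} w)$, which satisfies $\phi_z(0) = z$ and (by a direct computation) $\phi_z'(0) = 1 - |z|^2$. Setting $F := f \circ \phi_z$, one has $F(U) = f(U)$ since $\phi_z$ is a bijection of $U$, and by the chain rule $|F'(0)| = |f'(z)|(1-|z|^2)$. Consequently
\[
\dist(f(z), \partial f(U)) = \dist(F(0), \partial F(U)),
\]
so the assertion reduces to proving $\tfrac{1}{4}|F'(0)| \leq \dist(F(0),\partial F(U)) \leq |F'(0)|$ for an arbitrary conformal $F : U \to \mc$. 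Composing with an affine transformation in the target, I may further normalize so that $F(0)=0$ and $F'(0)=1$, reducing the task to showing $1/4 \leq \dist(0, \partial F(U)) \leq 1$ for a schlicht function $F$ on $U$ satisfying this standard normalization.

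For the upper bound, set $d := \dist(0, \partial F(U))$, so that the open disk of radius $d$ about the origin is contained in $F(U)$. Then the function $g(w) := F^{-1}(dw)$ is a holomorphic self-map of $U$ with $g(0)=0$, and the Schwarz lemma yields $|g'(0)| = d/|F'(0)| = d \leq 1$, which is the desired upper estimate.

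The lower bound is precisely the Koebe one-quarter theorem, and this is the main obstacle. Its classical proof proceeds in two steps: first, the area theorem (Gronwall's inequality applied to the Laurent expansion of $1/F(1/w)$) yields the coefficient bound $|a_2| \leq 2$ for the second Taylor coefficient of any normalized schlicht function; second, for any $w_0 \in \partial F(U)$ one applies this coefficient bound to the auxiliary univalent map $w \mapsto F(w)/(1 - F(w)/w_0)$, whose second Taylor coefficient is $a_2 + 1/w_0$, to conclude $|w_0| \geq 1/4$. Rather than reproduce this well-known argument I would simply invoke it from a standard reference on univalent functions, since the paper uses the Koebe theorem as a black-box tool.
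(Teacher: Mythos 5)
Your argument is correct, and there is nothing in the paper to compare it against: the authors do not prove the Koebe distortion theorem, they simply cite Pommerenke \cite[Cor.\ 1.4]{Pommerenke} and use it as a black box. Your reduction to the normalized case at the origin via the disk automorphism $\phi_z$ is the standard one, and the computations ($\phi_z'(0)=1-|z|^2$, $|F'(0)|=|f'(z)|(1-|z|^2)$, invariance of $\dist$ under the reparametrization and under affine renormalization in the target) are all right. The upper bound via the Schwarz lemma applied to $g(w)=F^{-1}(dw)$ is clean; one small point you tacitly rely on is that $d=\dist(0,\partial F(U))<\infty$, which holds because a univalent $F:U\to\mc$ cannot be onto $\mc$ (otherwise $F^{-1}$ would be a bounded nonconstant entire function). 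Your sketch of the lower bound (area theorem $\Rightarrow |a_2|\le 2$ $\Rightarrow$ the $1/4$-theorem via the auxiliary map $w\mapsto F(w)/(1-F(w)/w_0)$) is the classical Bieberbach argument and is also correct; invoking it from a standard reference is entirely in the spirit of the paper, which itself outsources this result.
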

For a proof we refer to Pommerenke \cite[Cor. 1.4]{Pommerenke}.
\section{Eigenvalue estimates for bounded operators}\label{sec:bounded}
Let $A, B \in \mathbf{B}(\mathcal{H})$. Assume that $A$ is selfadjoint with
\begin{equation*}
\sigma(A) = [a,b], \quad a < b
\end{equation*}
and
\begin{equation*}
B-A \in \mathbf{S}_p \quad \text{for some } p > 0.
\end{equation*}
The last assumption and Remark \ref{rem:dav} imply that
\begin{equation*}
\sigma(B) \cap (\mc \setminus [a,b]) = \sigma_{disc}(B).
\end{equation*}
To obtain information on $\sigma_{disc}(B)$ we define a  conformal map $ k: \hat{\mc} \setminus [a,b] \to U $ as follows:
\begin{equation}\label{eq:ff}
k = k_{a,b}= w^{-1} \circ g,
\end{equation}
where $g :  \hat{\mc} \setminus [a,b] \to \hat{\mc} \setminus [-1,1] $ with
\begin{equation*}
 g(z)= \frac 1 {b-a} \left(2z-(b+a) \right), \quad   g^{-1}(z)= \frac 1 2 \left( (b-a)z + (b+a) \right)
\end{equation*}
and $w: U \to \hat{\mc} \setminus [-1,1]$ with
\begin{equation}\label{eq:ww}
 w(z)=\frac{1}{2} \left( z + z^{-1} \right), \quad w^{-1}(z)=  z - \sqrt{ z^2-1}.
\end{equation}
With $h_{A,B}^{(p)}$ as defined in (\ref{eq:per_det}) the composition
\begin{equation*}
[h_{A,B}^{(p)} \circ k^{-1}](z)= {\det}_{\lceil p
\rceil}(I-(k^{-1}(z)-A)^{-1}(B-A))
\end{equation*}
is analytic on $U$, by (\ref{eq:infty}) we have $[h_{A,B}^{(p)}\circ
k^{-1}](0)=1$, and $z_0$ is a zero of this function if and only if
$k^{-1}(z_0)$ is an eigenvalue of $B$ of the same multiplicity.
Since
\begin{small}
\begin{equation*}
  |[h_{A,B}^{(p)} \circ k^{-1}](z)| \leq \exp\left( \Gamma_p \|(k^{-1}(z)-A)^{-1}(B-A)\|_{\mathbf{S}_p}^p \right)
\end{equation*}
\end{small}
by estimate (\ref{eq:estimate_dunford}), the following theorem is a direct consequence of Lemma \ref{lem:com2}.
\begin{thm}\label{thm:te1}
  Let $m \in M(E,\alpha,\beta)$ for some finite $E\subset \partial U$, $\alpha,\beta \geq 0$ and suppose that for $z \in U$
  \begin{equation}\label{eq:esta1}
    \|(k^{-1}(z)-A)^{-1}(B-A)\|_{\mathbf{S}_p}^p \leq m(z).
  \end{equation}
Then for $\gamma > \max(1+\alpha, \alpha + \beta)$
\begin{equation}\label{eq:com2}
\sum_{\lambda \in \sigma_{disc}(B)} (1-|k(\lambda)|)^\gamma \leq \Gamma_p C_{\gamma}(m)
\end{equation}
where the finite constant $C_\gamma(m)$ was defined in (\ref{eq:cg1}).
\end{thm}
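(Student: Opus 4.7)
The plan is to apply Lemma~\ref{lem:com2} to the composed analytic function
\[ H := h_{A,B}^{(p)} \circ k^{-1} \colon U \to \mc. \]
From the discussion immediately preceding the theorem statement, $H$ is analytic on $U$ with $H(0) = 1$ by (\ref{eq:infty}), and the zeros of $H$ in $U$ are exactly the images under $k$ of the eigenvalues of $B$ lying in $\hat{\mc} \setminus [a,b]$, with matching algebraic multiplicities. Under the standing assumption $B - A \in \mathbf{S}_p$, Remark~\ref{rem:dav} identifies these eigenvalues with $\sigma_{disc}(B)$.

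Next, I would combine the perturbation-determinant bound (\ref{eq:estimate_dunford}) with the hypothesis (\ref{eq:esta1}) to obtain
\[ |H(z)| \leq \exp\left( \Gamma_p \| (k^{-1}(z)-A)^{-1}(B-A) \|_{\mathbf{S}_p}^p \right) \leq \exp(\Gamma_p m(z)), \qquad z \in U. \]
The class $M(E,\alpha,\beta)$ is manifestly preserved under multiplication by the positive constant $\Gamma_p$, since the defining inequality (\ref{eq:m}) is just rescaled in its constant $C_0$. Hence $\Gamma_p m \in M(E,\alpha,\beta)$, and $H$ satisfies all hypotheses of Lemma~\ref{lem:com2} with $m$ replaced by $\Gamma_p m$.

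The lemma then yields, for every $\gamma > \max(1+\alpha,\alpha+\beta)$,
\[ \sum_{z \in U,\; H(z)=0} (1-|z|)^\gamma \leq C_\gamma(\Gamma_p m). \]
Two observations finish the argument. First, the defining integral (\ref{eq:cg1}) is linear in its argument, so $C_\gamma(\Gamma_p m) = \Gamma_p C_\gamma(m)$. Second, I would reindex the sum via the multiplicity-preserving bijection $z = k(\lambda) \leftrightarrow \lambda = k^{-1}(z)$ between zeros of $H$ in $U$ and elements of $\sigma_{disc}(B)$; this turns the left-hand side into $\sum_{\lambda \in \sigma_{disc}(B)} (1-|k(\lambda)|)^\gamma$, giving (\ref{eq:com2}).

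I do not anticipate any real obstacle: the theorem is, as the authors indicate, a direct specialization of Lemma~\ref{lem:com2}. The only point that deserves explicit verification is that the correspondence between zeros of $H$ on $U$ and eigenvalues of $B$ off $[a,b]$ is both bijective and multiplicity-preserving; this follows from the analogous property of $h_{A,B}^{(p)}$ on $\rho(A)$ noted just after (\ref{eq:per_det}), combined with the conformality of $k$ on $\hat{\mc} \setminus [a,b]$.
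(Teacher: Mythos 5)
Your proposal is correct and follows exactly the route the paper intends: the authors present all of your ingredients (the composition $h_{A,B}^{(p)}\circ k^{-1}$ is analytic on $U$ with value $1$ at $0$ by (\ref{eq:infty}), its zeros correspond with multiplicity to the eigenvalues of $B$ off $[a,b]$, and the Dunford--Schwartz bound (\ref{eq:estimate_dunford}) combined with (\ref{eq:esta1}) gives the growth estimate) in the paragraph immediately preceding the theorem and then call the theorem a direct consequence of Lemma~\ref{lem:com2}. You have merely made explicit the two bookkeeping points the paper glosses over, namely that $\Gamma_p m\in M(E,\alpha,\beta)$ and that $C_\gamma(\cdot)$ in (\ref{eq:cg1}) is linear so that $C_\gamma(\Gamma_p m)=\Gamma_p C_\gamma(m)$; both are right.
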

\begin{rem}
In the summation on the LHS of (\ref{eq:com2}), each eigenvalue of
$B$ is counted according to its algebraic multiplicity. In the
following results in this paper, this will be taken for granted
whenever a sum involving eigenvalues is considered.
\end{rem}
If no further information on the operators $A$ and $B$ is available,
the obvious way to show the validity of (\ref{eq:esta1}) is to use the estimate
\begin{equation}\label{eq:wq1}
  \|(k^{-1}(z)-A)^{-1}(B-A)\|_{\mathbf{S}_p}^p \leq \|(k^{-1}(z)-A)^{-1}\|^p \|B-A\|_{\mathbf{S}_p}^p
\end{equation}
and the identity (here we use the assumption that $A$ is
selfadjoint)
\begin{equation}\label{eq:wq2}
\|(k^{-1}(z)-A)^{-1}\| = \frac 1 {\dist(k^{-1}(z),[a,b])}= \frac{2}{b-a} \frac{1}{\dist(w(z),[-1,1])}.
\end{equation}
The proof of the following Lemma is provided in the appendix.
\begin{lemma}\label{lem:ko1}
Let $w(z)$ be defined by (\ref{eq:ww}). For $z \in U$, we have
\begin{small}
  \begin{equation*}\label{eq:ko1}
 \frac{1}{4} \frac{|z^2-1|(1-|z|)}{|z|} \leq \dist(w(z),[-1,1]) \leq   \frac{1+\sqrt{2}}{4} \frac{|z^2-1|(1-|z|)}{|z|}.
  \end{equation*}
\end{small}
\end{lemma}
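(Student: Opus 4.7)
The plan is to compute $\dist(w(z),[-1,1])$ explicitly in polar coordinates. Writing $z=re^{i\theta}$ with $0<r<1$, one has $w(z)=A\cos\theta-iB\sin\theta$, where $A=(r+r^{-1})/2$, $B=(r^{-1}-r)/2$, and $A^2-B^2=1$ (so the circles $|z|=r$ map to confocal ellipses with foci $\pm 1$ and semi-axes $A,B$). While applying the Koebe distortion theorem (Theorem~\ref{thm:koebe}) to $w$ delivers bounds of the right \emph{form}, its universal constant $1/4$ is too weak to reach the sharp upper constant $(1+\sqrt 2)/4$; hence a direct calculation is needed, and the ensuing one-variable optimization will be the main obstacle.

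I split into two cases according to whether $\Re w(z)=A\cos\theta$ lies in $[-1,1]$. \emph{Case 1}: $A|\cos\theta|\leq 1$; the foot of the perpendicular from $w(z)$ onto the real axis lands on $[-1,1]$, so $\dist(w(z),[-1,1])=B|\sin\theta|=(1-r^2)|\sin\theta|/(2r)$. \emph{Case 2}: $A|\cos\theta|>1$; the nearest point of $[-1,1]$ is the endpoint $\pm 1$, and expanding $(A|\cos\theta|-1)^2+B^2\sin^2\theta$ and using $A^2-B^2=1$ and $\cos^2\theta+\sin^2\theta=1$ collapses this to $(A-|\cos\theta|)^2$, giving $\dist(w(z),[-1,1])=A-|\cos\theta|$. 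Combined with $|z^2-1|^2=(1-r^2)^2+4r^2\sin^2\theta$, the quantity
\[
\rho(z):=\frac{\dist(w(z),[-1,1])\cdot|z|}{|z^2-1|(1-|z|)}
\]
becomes an explicit rational function of $r$ and $\theta$ in each case.

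In Case 1 one finds $\rho^2=(1+r)^2\sin^2\theta/\bigl[4\bigl((1-r^2)^2+4r^2\sin^2\theta\bigr)\bigr]$, which is monotone in $\sin^2\theta$; its values at the endpoints $\sin^2\theta=1$ and $\sin^2\theta=(1-r^2)^2/(1+r^2)^2$ (the Case-1/Case-2 interface) are $(1+r)^2/(4(1+r^2)^2)$ and $(1+r)^2/(4(1+6r^2+r^4))$ respectively, using the identity $(1-r^2)^2+4r^2=(1+r^2)^2$. A parallel monotonicity argument in $\cos^2\theta$ for Case 2 produces exactly the same two endpoint values. The problem thereby reduces to two one-variable extremal problems on $(0,1)$. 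Maximizing $(1+r)/(2(1+r^2))$ leads to the critical equation $r^2+2r-1=0$, hence $r=\sqrt 2-1$ and the sharp upper bound $(1+\sqrt 2)/4$, attained at $z=(\sqrt 2-1)i$. The lower bound $\rho\geq 1/4$ reduces to the elementary inequality $4(1+r)^2\geq 1+6r^2+r^4$ on $[0,1]$ (equivalent to $3+8r-2r^2-r^4\geq 0$, a routine polynomial check) together with the observation $1/(2(1+r))\geq 1/4$, which handles the remaining Case-2 endpoint $\cos\theta=1$.
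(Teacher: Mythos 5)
Your proposal is correct and follows essentially the same route as the paper's proof. The paper also splits $U$ according to whether $\Re w(z)$ lies in $[-1,1]$ (its regions $Z_3$ versus $Z_1\cup Z_2$ are exactly your Case~1 and Case~2), computes $\dist(w(z),[-1,1])$ explicitly in each region, passes to polar coordinates, reduces to a one-variable monotonicity argument in $\cos^2\theta$ (you use $\sin^2\theta$, which is the same thing), and ends up optimizing the same three expressions $\frac{1+r}{2(1+r^2)}$, $\frac{1+r}{2\sqrt{1+6r^2+r^4}}$, and $\frac{1}{2(1+r)}$ over $r\in(0,1)$ to obtain the constants $\frac{1+\sqrt 2}{4}$ and $\frac 14$.
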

\begin{thm}\label{thm:kd1}
  For $\gamma > \max(1+p,2p)$ and $k=k_{a,b}$ as above we have
  \begin{equation*}\label{eq:kd1}
    \sum_{\lambda \in \sigma_{disc}(B)} (1-|k(\lambda)|)^\gamma \leq
    \Gamma_p \left( \frac{2}{b-a} \right)^p C_{\gamma,p} \| B-A
    \|_{\mathbf{S}_p}^p,
  \end{equation*}
where
\begin{equation}\label{eq:aa1}
C_{\gamma,p} =  \frac{\gamma}{2\pi} \int_{\frac 1 {{\gamma}}}^1 dr \frac{(r\gamma-1)}{(1-r)^{2-\gamma}}  \int_0^{2\pi} d\theta \:  \frac{1}{\dist(w(re^{i\theta}),[-1,1])^p}
\end{equation}
is a finite constant.
\end{thm}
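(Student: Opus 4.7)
The plan is to apply Theorem \ref{thm:te1} with $\alpha = \beta = p$ and the majorant
\begin{equation*}
  m(z) := \left(\frac{2}{b-a}\right)^p \|B-A\|_{\mathbf{S}_p}^p \cdot \frac{1}{\dist(w(z),[-1,1])^p}.
\end{equation*}
Hypothesis (\ref{eq:esta1}) for this choice follows immediately from estimate (\ref{eq:wq1}) combined with the resolvent identity (\ref{eq:wq2}), both of which are available since $A$ is selfadjoint.

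The key step is to verify that $m \in M(E, p, p)$ with $E = \{-1, 1\} \subset \partial U$. First I would invoke Lemma \ref{lem:ko1} and the factorization $|z^2 - 1| = |z-1||z+1|$ to obtain
\begin{equation*}
  \frac{1}{\dist(w(z),[-1,1])^p} \leq \frac{4^p |z|^p}{|z-1|^p |z+1|^p (1-|z|)^p}, \quad z \in U.
\end{equation*}
Then I would observe that for every $z \in \mc$ the triangle inequality applied to $-z+1$ and $z+1$ yields $|z-1| + |z+1| \geq 2$, so $\max(|z-1|, |z+1|) \geq 1$, and therefore $|z-1||z+1| \geq \min(|z-1|,|z+1|) = \dist(z, E)$. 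Combined with $|z|^p \leq 1$ for $z \in U$, this produces a bound of exactly the form (\ref{eq:m}) with $\alpha = \beta = p$ and $C_0 = 4^p \left(\frac{2}{b-a}\right)^p \|B-A\|_{\mathbf{S}_p}^p$.

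With $m$ so identified, Theorem \ref{thm:te1} applies directly in the stated range, since $\max(1+p, 2p) = \max(1+\alpha, \alpha + \beta)$. Substituting $m$ into (\ref{eq:cg1}) and pulling the two $z$-independent constants out of the double integral produces exactly $\Gamma_p \left(\frac{2}{b-a}\right)^p C_{\gamma,p} \|B-A\|_{\mathbf{S}_p}^p$ on the right-hand side of (\ref{eq:com2}), with $C_{\gamma,p}$ as in (\ref{eq:aa1}); the finiteness of $C_{\gamma,p}$ is then automatic from Lemma \ref{lem:com2} applied to this $m$. Since the theorem is essentially a corollary of Theorem \ref{thm:te1} once the right majorant has been chosen, no serious obstacle is expected; the only genuinely non-routine point is the elementary inequality $|z^2-1| \geq \dist(z, \{-1, 1\})$ on $\overline{U}$, which is what bridges the Koebe-type bound in Lemma \ref{lem:ko1} with membership in the class $M(E, p, p)$.
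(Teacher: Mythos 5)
Your proof is correct and follows the same route as the paper: bound $\|(k^{-1}(z)-A)^{-1}(B-A)\|_{\mathbf{S}_p}^p$ via (\ref{eq:wq1})--(\ref{eq:wq2}), use Lemma \ref{lem:ko1} to place the resulting majorant in a class $M(E,\alpha,\beta)$ with $E=\{-1,1\}$, and invoke Theorem \ref{thm:te1}. Your identification $(\alpha,\beta)=(p,p)$ is in fact the correct one, consistent with the stated threshold $\gamma>\max(1+p,2p)=\max(1+\alpha,\alpha+\beta)$; note that the paper's text asserts membership in $M(\{-1,1\},1+p,2p)$, which would force $\gamma>\max(2+p,1+3p)$ and contradict the theorem statement, so you have implicitly corrected what appears to be a typo (the pair $(1+p,2p)$ is the threshold pair $(1+\alpha,\alpha+\beta)$, not the exponent pair $(\alpha,\beta)$).
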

\begin{proof}
From (\ref{eq:wq1}) and (\ref{eq:wq2}) we obtain
\begin{equation*}\label{eq:fd}
   \|(k^{-1}(z)-A)^{-1}(B-A)\|_{\mathbf{S}_p}^p \leq  \left( \frac{2}{b-a} \right)^p \| B-A \|_{\mathbf{S}_p}^p \frac{1}{\dist(w(z),[-1,1])^p}.
\end{equation*}
Since $ z \mapsto (\dist(w(z),[-1,1]))^{-p} \in M(\{-1,1\},1+p,2p)$
by Lemma \ref{lem:ko1}, we obtain from Theorem \ref{thm:te1} that
for $\gamma > \max(1+p,2p)$
\begin{equation*}
  \sum_{\lambda \in \sigma_{disc}(B)} (1-|k(\lambda)|)^\gamma \leq \Gamma_p \left( \frac{2}{b-a} \right)^p   C_{\gamma,p} \| B-A \|_{\mathbf{S}_p}^p.
\end{equation*}
Here the finite constant $C_{\gamma,p}$ is defined as in (\ref{eq:aa1}).
\end{proof}
Lemma \ref{lem:ko1} can be used to obtain a more transparent
formulation of Theorem \ref{thm:kd1}.
\begin{thm}\label{thm:aa2}
  Let $\gamma > \max(1+p,2p)$. Then
\begin{equation*}
 \sum_{\lambda \in \sigma_{disc}(B)} \frac{\dist(\lambda,[a,b])^\gamma}{|\lambda-a|^{\frac \gamma 2}|\lambda-b|^{\frac \gamma 2 }}   \leq  \Gamma_p \left( \frac{2}{b-a} \right)^{p} \left(\frac{1+\sqrt{2}}{2} \right)^\gamma C_{\gamma,p} \| B-A \|_{\mathbf{S}_p}^p
\end{equation*}
where the finite constant $C_{\gamma,p}$ was defined in (\ref{eq:aa1}).
\end{thm}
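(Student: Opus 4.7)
The plan is to derive this as a pointwise corollary of Theorem~\ref{thm:kd1} together with Lemma~\ref{lem:ko1}. More precisely, I will show the pointwise inequality
\[
\frac{\dist(\lambda,[a,b])}{|\lambda-a|^{1/2}|\lambda-b|^{1/2}} \leq \frac{1+\sqrt{2}}{2}\,(1-|k(\lambda)|)
\qquad (\lambda \in \mc\setminus[a,b]),
\]
raise it to the $\gamma$-th power, sum over $\lambda \in \sigma_{disc}(B)$, and plug the resulting upper bound into Theorem~\ref{thm:kd1}. There is no new complex-analytic input; the work is purely in translating distances through the conformal map $k=w^{-1}\circ g$ and invoking the distortion estimate already proved.

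The main computational step is to change variables $\zeta = k(\lambda)$, equivalently $\lambda = g^{-1}(w(\zeta))$. Using the affine form of $g^{-1}$ one finds
\[
\lambda - a = \tfrac{b-a}{2}(w(\zeta)+1), \quad \lambda - b = \tfrac{b-a}{2}(w(\zeta)-1), \quad \dist(\lambda,[a,b]) = \tfrac{b-a}{2}\dist(w(\zeta),[-1,1]).
\]
The factors of $\tfrac{b-a}{2}$ cancel in the ratio, leaving
\[
\frac{\dist(\lambda,[a,b])}{|\lambda-a|^{1/2}|\lambda-b|^{1/2}} = \frac{\dist(w(\zeta),[-1,1])}{|w(\zeta)^2-1|^{1/2}}.
\]
Next I would use the algebraic identity $w(\zeta)^2-1 = \tfrac14(\zeta-\zeta^{-1})^2$, which follows directly from $w(\zeta)=\tfrac12(\zeta+\zeta^{-1})$, to obtain $|w(\zeta)^2-1|^{1/2} = \tfrac12\,|\zeta^2-1|/|\zeta|$. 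Substituting this denominator and applying the upper bound in Lemma~\ref{lem:ko1} gives
\[
\frac{\dist(w(\zeta),[-1,1])}{|w(\zeta)^2-1|^{1/2}} \leq \frac{\frac{1+\sqrt2}{4}\cdot\frac{|\zeta^2-1|(1-|\zeta|)}{|\zeta|}}{\frac{1}{2}\cdot\frac{|\zeta^2-1|}{|\zeta|}} = \frac{1+\sqrt{2}}{2}\,(1-|\zeta|),
\]
which is precisely the claimed pointwise inequality.

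Raising to the $\gamma$-th power, summing over $\lambda \in \sigma_{disc}(B)$, and inserting the conclusion of Theorem~\ref{thm:kd1} yields the stated bound with the extra factor $\bigl(\tfrac{1+\sqrt2}{2}\bigr)^\gamma$. I do not expect any serious obstacle: the algebraic identity for $w(\zeta)^2-1$ is immediate, the cancellations in the change of variables are clean, and Lemma~\ref{lem:ko1} is quoted as a black box. If anything, the only point to be careful about is that both inequalities in Lemma~\ref{lem:ko1} are used implicitly in the reduction philosophy, but in the actual proof of Theorem~\ref{thm:aa2} only the upper bound is needed.
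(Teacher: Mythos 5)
Your proposal is correct and matches the paper's own proof in all essentials: both derive the pointwise inequality $\dist(\lambda,[a,b])/(|\lambda-a|^{1/2}|\lambda-b|^{1/2}) \leq \frac{1+\sqrt{2}}{2}(1-|k(\lambda)|)$ by pulling the distance through the affine map $g^{-1}$ and applying the upper bound of Lemma~\ref{lem:ko1}, and then plug into Theorem~\ref{thm:kd1}. Your version merely spells out the intermediate identity $w(\zeta)^2-1=\tfrac14(\zeta-\zeta^{-1})^2$ which the paper leaves implicit.
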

\begin{proof}
  From Lemma \ref{lem:ko1} we get for $z = k(\lambda)=w^{-1}(g(\lambda))$
  \begin{eqnarray*}
    \dist(\lambda , [a,b]) &=& \dist(g^{-1}(w(z)),[a,b])\\
   &=& \frac{b-a}{2} \dist(w(z),[-1,1]) \leq  \frac{b-a}{2} \frac{1+\sqrt{2}}{4} \frac{|z^2-1|}{|z|}(1-|z|) \\
&=& \frac{b-a}{2}  \frac{1+\sqrt{2}}{4}  \frac{|k(\lambda)^2-1|}{|k(\lambda)|} (1-|k(\lambda)|)  \\
&=& \frac{1+\sqrt{2}}{2} {|(\lambda-a)(\lambda-b)|^{1/2}}
(1-|k(\lambda)|),
  \end{eqnarray*}
  so that
$$(1-|k(\lambda)|)^\gamma \geq
\Big(\frac{2}{1+\sqrt{2}}\frac{\dist(\lambda,[a,b])}{|(\lambda-a)(\lambda-b)|^{\frac{1}{2}}}\Big)^\gamma,$$
and an application of Theorem \ref{thm:kd1} concludes the proof.
\end{proof}
\section{Eigenvalue estimates for unbounded operators}\label{sec:unboundedI}
Let $H_0, H  \in \mathbf{C}(\mathcal{H})$ and suppose that $H_0$ is selfadjoint with $\sigma(H_0)=[0,\infty)$. To apply the results of the last section, we assume that
\begin{equation}
  \label{eq:assumption_res}
  R_s=(s-H)^{-1}-(s-H_0)^{-1} \in \mathbf{S}_p
\end{equation}
for some $p > 0$ and $s \in \rho(H_0) \cap \rho(H)\cap \mr_-$. The last assumption, together with the spectral mapping theorem for resolvents, implies that
\begin{equation*}
    \sigma(H) \cap (\mc \setminus [0,\infty)) = \sigma_{disc}(H).
  \end{equation*}
  \begin{rem}
Given assumption $(\ref{eq:assumption_res})$ there might exist a
sequence of eigenvalues of $H$ that diverges to infinity, i.e. the
points of $\sigma_{disc}(H)$ can accumulate in $[0,\infty) \cup
\{\infty\}$. However, $(\ref{eq:assumption_res})$ implies some
restrictions on the rate of divergence as can be seen from the next
theorem.
  \end{rem}
\begin{thm}\label{thm:t3}
Let $H_0, H$ be as above and let $\gamma > \max(1+p,2p)$. Then
  \begin{equation*}
\sum_{\lambda \in \sigma_{disc}(H)} \frac{ \dist(\lambda,[0,\infty))^\gamma }{|\lambda|^{\frac \gamma 2}
(1+|\lambda|)^{\gamma}}  \leq  2^{p}C_s^\gamma |s|^{(\frac{\gamma}{2}+p)}
\Gamma_p C_{\gamma,p}\|R_s\|_{\mathbf{S}_p}^p,
  \end{equation*}
where the finite constant $C_{\gamma,p}$ was defined in  (\ref{eq:aa1}) and
\begin{equation}\label{eq:vcx}
  C_s = 4 \sup_{z \in U} \frac{1+|z|}{|z-1|^2+|s| |z+1|^2}.
\end{equation}
\end{thm}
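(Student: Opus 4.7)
The plan is to reduce to the bounded case of Theorem \ref{thm:kd1} via the resolvent transformation $\lambda \leftrightarrow \mu := (s-\lambda)^{-1}$. Setting $A := (s-H_0)^{-1}$ and $B := (s-H)^{-1}$, one obtains two bounded operators with $A$ selfadjoint, $\sigma(A) = [1/s,0]$ (since $s<0$), and $B - A = -R_s \in \mathbf{S}_p$. Writing $a := 1/s$ and $b := 0$ so that $b-a = 1/|s|$, Theorem \ref{thm:kd1} yields
\begin{equation*}
\sum_{\mu \in \sigma_{disc}(B)} \bigl(1-|k_{a,b}(\mu)|\bigr)^\gamma \;\leq\; 2^p |s|^p\,\Gamma_p\, C_{\gamma,p}\, \|R_s\|_{\mathbf{S}_p}^p.
\end{equation*}
The Möbius map $\mu \mapsto s - 1/\mu$ sends $\sigma_{disc}(B)$ bijectively onto $\sigma_{disc}(H)$ with algebraic multiplicities preserved, so what remains is to bound $(1-|k_{a,b}(\mu)|)^\gamma$ from below by the desired $\lambda$-quantity.

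For this I would work out the composite conformal map $\tau: U \to \mc \setminus [0,\infty)$ defined implicitly by $k_{a,b}\bigl((s-\tau(z))^{-1}\bigr) = z$. Substituting the explicit formulas (\ref{eq:ff})--(\ref{eq:ww}) for $k_{a,b} = w^{-1} \circ g$ and simplifying leads, after a short routine calculation, to the clean closed form
\begin{equation*}
\tau(z) \;=\; s\left(\frac{z+1}{z-1}\right)^2,
\end{equation*}
which is holomorphic and univalent on the open disk $U$ (its only pole sits at $z = 1 \in \partial U$). Further elementary algebra then yields, for $\lambda = \tau(z)$,
\begin{equation*}
|\lambda|^{1/2} = \sqrt{|s|}\,\frac{|z+1|}{|z-1|}, \quad 1+|\lambda| = \frac{|z-1|^2+|s||z+1|^2}{|z-1|^2}, \quad |\tau'(z)|(1-|z|^2) = \frac{4|s||z+1|(1-|z|)(1+|z|)}{|z-1|^3}.
\end{equation*}

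Finally I would combine these ingredients. Koebe distortion (Theorem \ref{thm:koebe}) applied to $\tau$ gives $\dist(\lambda,[0,\infty)) \leq |\tau'(z)|(1-|z|^2)$, so substituting the expressions above,
\begin{equation*}
\frac{\dist(\lambda,[0,\infty))^\gamma}{|\lambda|^{\gamma/2}(1+|\lambda|)^\gamma} \;\leq\; |s|^{\gamma/2}(1-|z|)^\gamma \left(\frac{4(1+|z|)}{|z-1|^2+|s||z+1|^2}\right)^\gamma \;\leq\; |s|^{\gamma/2} C_s^\gamma (1-|z|)^\gamma,
\end{equation*}
where the last inequality is precisely the definition (\ref{eq:vcx}) of $C_s$. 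Summing over $\lambda \in \sigma_{disc}(H)$, equivalently over $\mu \in \sigma_{disc}(B)$ with $z = k_{a,b}(\mu)$, and inserting the first displayed bound produces the stated inequality.

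The main obstacle is the algebraic identification $\tau(z) = s((z+1)/(z-1))^2$ together with the clean expressions of $|\lambda|$, $1+|\lambda|$, and $|\tau'(z)|(1-|z|^2)$ in $z$-coordinates — these have to line up exactly with the supremum defining $C_s$ for the final collapse to work. That $\tau$ has its pole at $z = 1$ on the boundary of $U$ rather than in $U$ itself is pleasant, since Koebe distortion then applies to it without any modification.
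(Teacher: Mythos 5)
Your proof is correct and follows essentially the same route as the paper: reduce to the bounded case via the resolvent transformation, apply Theorem~\ref{thm:kd1}, identify the composite conformal map $l_s^{-1}(z)=s\bigl(\tfrac{z+1}{z-1}\bigr)^2$, and use Koebe distortion to translate $(1-|z|)^\gamma$ into the $\lambda$-quantity, with $C_s$ absorbing the remaining supremum. The only difference is presentational: the paper packages the Koebe step into Lemma~\ref{lem:hg} (stated in terms of $|\lambda-s|$ and $|l_s(\lambda)|$) and then factors the resulting bound, whereas you unpack everything directly into $z$-coordinates; both yield exactly the same constant.
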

For the proof of this theorem we will need the contents of the next lemma.
\begin{lemma}\label{lem:hg}
Let $ s \in \mr_-$ and define
\begin{equation}\label{eq:kh2}
l_s : \mc \setminus [0,\infty) \to U, \quad
l_s(\lambda)=k_{s^{-1},0}((s-\lambda)^{-1}),
\end{equation} where $k_{s^{-1},0}$ was defined in (\ref{eq:ff}). Then the following holds for $\lambda \in \mc \setminus [0,\infty)$
\begin{small}
\begin{equation*}
\frac 1 4 \left| \frac{\lambda}{s} \right|^{1/2}
\frac{|\lambda-s|(1-|l_s(\lambda)|^2)}{|l_s(\lambda)|} \leq
\dist(\lambda,[0,\infty)) \leq \left| \frac{\lambda}{s}
\right|^{1/2}
\frac{|\lambda-s|(1-|l_s(\lambda)|^2)}{|l_s(\lambda)|}.
 \end{equation*}
\end{small}
\end{lemma}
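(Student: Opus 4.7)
The plan is to apply the Koebe distortion theorem (Theorem \ref{thm:koebe}) to the inverse of $l_s$. First I would observe that, by construction, $l_s$ is a conformal bijection from $\mc \setminus [0,\infty)$ onto $U$. Indeed, $\lambda \mapsto (s-\lambda)^{-1}$ is a Möbius map that sends $[0,\infty)$ onto $[s^{-1},0]$ and hence $\mc \setminus [0,\infty)$ onto $\mc \setminus [s^{-1},0]$, and then $k_{s^{-1},0}$ is, by definition, a conformal map from the latter set onto $U$. Let $f = l_s^{-1} : U \to \mc \setminus [0,\infty)$. Since $\partial f(U) = [0,\infty) \cup \{\infty\}$, Theorem \ref{thm:koebe} applied at the point $z = l_s(\lambda)$ yields
\begin{equation*}
\tfrac{1}{4}|f'(l_s(\lambda))|(1-|l_s(\lambda)|^2) \leq \dist(\lambda,[0,\infty)) \leq |f'(l_s(\lambda))|(1-|l_s(\lambda)|^2),
\end{equation*}
with $|f'(l_s(\lambda))| = 1/|l_s'(\lambda)|$ by the inverse function rule.

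The remaining task is the explicit computation showing
\begin{equation*}
\frac{1}{|l_s'(\lambda)|} = \left|\frac{\lambda}{s}\right|^{1/2}\frac{|\lambda-s|}{|l_s(\lambda)|}.
\end{equation*}
I would carry this out by first unraveling the composition defining $l_s$. With the $a=s^{-1}$, $b=0$ normalization one computes $g((s-\lambda)^{-1}) = (\lambda+s)/(\lambda-s)$, and the identity $(\lambda+s)^2 - (\lambda-s)^2 = 4\lambda s$ allows the square root in the definition $w^{-1}(z) = z - \sqrt{z^2-1}$ to be extracted. After factoring $\lambda-s = (\sqrt{\lambda}-\sqrt{s})(\sqrt{\lambda}+\sqrt{s})$ and $\lambda+s - 2\sqrt{\lambda s} = (\sqrt{\lambda}-\sqrt{s})^2$ (with suitable branches of the square root), the formula collapses to
\begin{equation*}
l_s(\lambda) = \frac{\sqrt{\lambda}-\sqrt{s}}{\sqrt{\lambda}+\sqrt{s}}.
\end{equation*}

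Differentiating this closed form then gives $l_s'(\lambda) = \sqrt{s}/\bigl(\sqrt{\lambda}(\sqrt{\lambda}+\sqrt{s})^2\bigr)$, so $|l_s'(\lambda)| = |s|^{1/2}|\sqrt{\lambda}+\sqrt{s}|^{-2}|\lambda|^{-1/2}$. The algebraic identity
\begin{equation*}
|\sqrt{\lambda}+\sqrt{s}|^2 \;=\; \frac{|\lambda-s|}{|\sqrt{\lambda}-\sqrt{s}|/|\sqrt{\lambda}+\sqrt{s}|} \;=\; \frac{|\lambda-s|}{|l_s(\lambda)|},
\end{equation*}
obtained by combining $|\lambda-s| = |\sqrt{\lambda}-\sqrt{s}|\,|\sqrt{\lambda}+\sqrt{s}|$ with $|l_s(\lambda)| = |\sqrt{\lambda}-\sqrt{s}|/|\sqrt{\lambda}+\sqrt{s}|$, now yields the desired expression for $1/|l_s'(\lambda)|$, and substituting into the Koebe two-sided bound completes the proof.

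The main obstacle is purely bookkeeping: arriving at the clean expression for $l_s(\lambda)$ in terms of square roots while keeping track of branches, and then spotting the identity $|\sqrt{\lambda}+\sqrt{s}|^2 = |\lambda-s|/|l_s(\lambda)|$ which is exactly what is needed to rewrite $1/|l_s'(\lambda)|$ in the required form. Once this algebraic identification is made, the statement is a direct consequence of Koebe applied to the inverse conformal map.
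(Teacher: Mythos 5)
Your proposal is correct and follows essentially the same strategy as the paper: identify $l_s^{-1}$ explicitly as a conformal map $U \to \mc\setminus[0,\infty)$, apply the Koebe distortion theorem at $z = l_s(\lambda)$, and then rewrite the derivative factor $|[l_s^{-1}]'(z)|$ in terms of $\lambda$. The only (immaterial) difference is in the bookkeeping: the paper works directly with the closed form $l_s^{-1}(z) = s\bigl(\tfrac{z+1}{z-1}\bigr)^2$ and computes $|[l_s^{-1}]'(z)| = 4|s|\,|z^2-1|/|z-1|^4$, whereas you first collapse $l_s$ itself to $\tfrac{\sqrt{\lambda}-\sqrt{s}}{\sqrt{\lambda}+\sqrt{s}}$ and differentiate in $\lambda$; both routes yield the same factor $\bigl|\tfrac{\lambda}{s}\bigr|^{1/2}\tfrac{|\lambda-s|}{|l_s(\lambda)|}$.
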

\begin{proof}
  We note that by definition
  \begin{equation*}
    l_s(\lambda)=  \left( \frac{\lambda + s }{\lambda -s} \right)  - \sqrt{ \left( \frac{\lambda + s }{\lambda -s} \right)^2-1}, \quad
l_s^{-1}(z)= s \left( \frac{z+1}{z-1} \right)^2
  \end{equation*}
and $l_s^{-1}$ is  a conformal map of $U$ onto $\mc \setminus
[0,\infty)$. For $z=l_s(\lambda)$ we can thus use the Koebe theorem
(Theorem \ref{thm:koebe}) to obtain
\begin{eqnarray*}
  \dist(\lambda,[0,\infty)) &=& \dist( l_s^{-1}(z), \partial l_s^{-1}(U)) \leq |[l_s^{-1}]'(z)|(1-|z|^2) \nonumber \\
&=& 4 |s| \left| \frac{z^2-1}{(z-1)^4} \right| (1-|z|^2)= 4 |s| \left| \frac{l_s(\lambda)^2-1}{(l_s(\lambda)-1)^4} \right| (1-|l_s(\lambda)|^2) \nonumber \\
 &=& \left| \frac{\lambda}{s} \right|^{1/2} \frac{|\lambda-s|(1-|l_s(\lambda)|^2)}{|l_s(\lambda)|}.
\end{eqnarray*}
Here the last equality follows by some algebraic manipulations. The lower bound on $\dist(\lambda,[0,\infty))$ is obtained in exactly the same manner.
\end{proof}
\begin{proof}[Proof of Theorem \ref{thm:t3}]
Let $\gamma > \max(1+p,2p)$. Since $\sigma((s-H_0)^{-1})=[\frac 1 s,0]$ we can apply Theorem \ref{thm:kd1} to the bounded operators $A=(s-H_0)^{-1}$ and $B=(s-H)^{-1}$ to obtain
 \begin{equation}\label{eq:gf2}
    \sum_{\mu \in \sigma_{disc}((s-H)^{-1})} (1-|k_{s^{-1},0}(\mu)|)^\gamma \leq   \Gamma_p 2^p |s|^p  C_{\gamma,p} \| R_s \|_{\mathbf{S}_p}^p
  \end{equation}
where $k_{s^{-1},0}$ and $C_{\gamma,p}$ were defined in (\ref{eq:ff}) and (\ref{eq:aa1}) respectively.
Since $\mu \in \sigma_{disc}((s-H)^{-1})$ if and only if $s- \frac{1}{\mu} \in \sigma_{disc}(H)$ we can reformulate the LHS of (\ref{eq:gf2}) as follows
\begin{eqnarray}\label{eq:p1}
\sum_{\mu \in \sigma_{disc}((s-H)^{-1})}
(1-|k_{s^{-1},0}(\mu)|)^\gamma =
 \sum_{\lambda \in \sigma_{disc}(H)} (1-|l_s(\lambda)|)^\gamma,
\end{eqnarray}
where   the function $l_s$ was defined in (\ref{eq:kh2}). From Lemma
\ref{lem:hg} we have
\begin{equation}
   1-|l_s(\lambda)|\geq
   \left[\frac{|l_s(\lambda)|(1+|\lambda|)}{|\lambda-s|(1+|l_{s}(\lambda)|)}\right]
   \left[\frac{|s|^{\frac{1}{2}}\dist(\lambda,[0,\infty))}{|\lambda|^{\frac{1}{2}}(1+|\lambda|)}\right]. \label{eq:fa}
\end{equation}
Furthermore, a short computation shows that
\begin{eqnarray*}
 \inf_{\lambda \in \mc \setminus [0,\infty)} \frac{ |l_s(\lambda)|(1+|\lambda|)}{|\lambda-s|(1+|l_s(\lambda)|)}=
  \inf_{z \in U} \frac{ |z|(1+|l_s^{-1}(z)|)}{|l_s^{-1}(z)-s|(1+|z|)}
  = \frac{1}{|s|C_s}
\end{eqnarray*}
where $C_s \in (0,\infty)$ was defined in (\ref{eq:vcx}). From (\ref{eq:fa}) we thus  obtain
\begin{equation*}
1-|l_s(\lambda)| \geq   \frac{1}{|s|^{1/2}C_s}
\frac{\dist(\lambda,[0,\infty))}{|\lambda|^{1/2}(1+|\lambda|)}.
\end{equation*}
With (\ref{eq:gf2}) and (\ref{eq:p1}) this concludes the proof of the theorem.
\end{proof}
\section{A counterexample}

In this section we present a counterexample which shows that, in one
respect, Theorem \ref{thm:kd1} and Theorem \ref{thm:aa2} are optimal:
For $\gamma < 1$ it is not possible to conclude the finiteness of \[ \sum_{\lambda \in \sigma_{disc}(B)} \frac{\dist(\lambda,[a,b])^\gamma}{|\lambda-a|^{\gamma/2}|\lambda-b|^{\gamma/2}}\] in terms of Schatten class properties of $B-A$.\\[4pt]
We work on the space $l^2(\mz)$, and denote its natural basis
by $\{\delta_j\}_{j\in\mz}$, where $\delta_j$ is defined by
$\delta_j(j)=1$, $\delta_j(k)=0$ for $k\neq j$.\\[4pt]
We define $A: l^2(\mz)\rightarrow l^2(\mz)$ to be the
discrete free Schr\"odinger operator:
\[ (A u)(k)=u(k-1)+u(k+1),\quad u\in l^2(\mz), \quad k \in \mz. \]
The spectrum of $A$ is $[-2,2]$.

\begin{prop}\label{fl}
Given any sequence $\{ \lambda_k\}_{k \in \mn} \subset \mc\setminus
[-2,2]$ which satisfies
\begin{equation*}\label{cond}\sum_{k} \frac{\dist(\lambda_k,[-2,2])}
{|\lambda_k+2|^{1/2}|\lambda_k-2|^{1/2}}<\infty,\end{equation*} there exists a
rank-one operator $M$ such that, setting $B=A+M$, we have
$\{ \lambda_k\} \subset \sigma_{disc}(B)$.
\end{prop}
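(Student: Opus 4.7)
The plan is to choose $\phi,\psi\in l^2(\mz)$ so that $Mu := \langle \psi,u\rangle \phi$ is rank-one and $B = A + M$ has each prescribed $\lambda_k$ as an eigenvalue. A standard calculation shows that for a rank-one perturbation, $\lambda \in \mc \setminus [-2,2]$ is an eigenvalue of $B$ if and only if
\[ F(\lambda) \;:=\; 1 + \langle \psi, (A-\lambda)^{-1}\phi\rangle \;=\; 0. \]
After diagonalising $A$ via the Fourier transform $\mathcal{F}:l^2(\mz)\to L^2(0,2\pi)$ (under which $A$ becomes multiplication by $2\cos\theta$), this rewrites as
\[ F(\lambda) \;=\; 1 + \frac{1}{2\pi}\int_0^{2\pi} \frac{G(\theta)}{2\cos\theta-\lambda}\,d\theta, \qquad G(\theta) := \overline{\hat\psi(\theta)}\,\hat\phi(\theta). \]
The degree of freedom I plan to exploit is that \emph{any} $G\in L^\infty$ can be written as $\overline{\hat\psi}\hat\phi$ with $\hat\phi,\hat\psi\in L^\infty\subset L^2$ by setting $\hat\phi=|G|^{1/2}e^{i(\arg G)/2}$ and $\hat\psi=|G|^{1/2}e^{-i(\arg G)/2}$. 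The task thus reduces to: find a bounded analytic $G$ on $U$ for which the induced $F$ vanishes at each $\lambda_k$.

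To convert this to a Blaschke problem on $U$, I would use the resolvent kernel $\langle\delta_0,(A-\lambda)^{-1}\delta_n\rangle = -r^{|n|+1}/(1-r^2)$ with $r := k_{-2,2}(\lambda)\in U$, which gives termwise
\[
F(\lambda) \;=\; 1 - \frac{r}{1-r^2}\sum_{n\in\mz} G_n r^{|n|}.
\]
Restricting to $G$ whose negative Fourier coefficients vanish (i.e.\ the boundary value of a function analytic in $U$, still called $G$), the sum collapses to the analytic extension and
\[
F(\lambda) \;=\; 1 - \frac{r\,G(r)}{1-r^2}.
\]
Moreover, writing $\lambda = z + 1/z$ for $z = k_{-2,2}(\lambda)$ gives $|\lambda\pm 2| = |z\pm 1|^2/|z|$, so $|\lambda-2|^{1/2}|\lambda+2|^{1/2}=|z^2-1|/|z|$; combined with Lemma \ref{lem:ko1} this yields
\[
\tfrac12(1-|r|) \;\leq\; \frac{\dist(\lambda,[-2,2])}{|\lambda-2|^{1/2}|\lambda+2|^{1/2}} \;\leq\; \tfrac{1+\sqrt{2}}{2}(1-|r|).
\]
In particular, the standing hypothesis on $\{\lambda_k\}$ is equivalent to the Blaschke condition $\sum_k(1-|r_k|)<\infty$ for $r_k := k_{-2,2}(\lambda_k)\in U$.

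Given the Blaschke condition, I would form the product $D(r) = \prod_k \frac{|r_k|}{r_k}\cdot\frac{r_k-r}{1-\overline{r_k}r}$, which converges on $U$ to a bounded analytic function with zero set exactly $\{r_k\}$ and with $D(0)=\prod_k|r_k|>0$. Set $\tilde F(r) := D(r)/D(0)$, so $\tilde F\in H^\infty$, $\tilde F(0)=1$, and $\tilde F$ has the same zero set as $D$. Solving the identity above backward, the required symbol is
\[
G(r) \;:=\; \frac{(1-r^2)\bigl(1-\tilde F(r)\bigr)}{r},
\]
which is analytic on $U$ (the apparent pole at $r=0$ is removable thanks to $\tilde F(0)=1$) and bounded (since $\tilde F$ and $1-r^2$ are), so $G\in H^\infty\subset L^\infty$. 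Applying the factorisation of the first paragraph produces $\hat\phi,\hat\psi\in L^2$; let $\phi,\psi$ be their inverse Fourier transforms in $l^2(\mz)$ and set $Mu = \langle\psi,u\rangle\phi$.

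Finally, running the computation of the second paragraph in the forward direction for this concrete $M$ shows that the perturbation determinant of $B$ equals $\tilde F \circ k_{-2,2}$, whose zero set in $\mc\setminus[-2,2]$ is precisely $\{\lambda_k\}$. Each $\lambda_k$ is isolated in $\mc\setminus[-2,2]$ because any accumulation point of $\{r_k\}$ lies on $\partial U$, and finite algebraic multiplicity is automatic since these are zeros of an analytic function, so $\{\lambda_k\}\subset\sigma_{disc}(B)$ as required. I expect the only technical bookkeeping to be in justifying the one-sided Fourier collapse $\sum_n G_n r^{|n|}\to G(r)$ for $G$ extending analytically to $U$, and in checking that the specific factorisation $G=\overline{\hat\psi}\hat\phi$ really reproduces the $F$ manufactured from $\tilde F$; both are formal once one works consistently with $H^\infty$ boundary values.
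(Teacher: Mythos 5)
Your proof is correct, and it takes a genuinely different route to the same destination. The paper's proof picks the specific rank-one perturbation $Mu=\bigl(\sum_j\alpha_j u(j)\bigr)\delta_0$ and exploits the fact that away from the $0$-th coordinate the eigenvalue equation decouples, so the explicit ansatz $u_z(k)=z^{|k|}$ gives eigenvectors with eigenvalue $\lambda=z+z^{-1}$; the single remaining equation at $k=0$ becomes the vanishing of a power series $\phi(z)$ whose coefficients are read off directly from $\{\alpha_j\}$, and matching $\phi$ to a Blaschke-type function in $H^2$ finishes the job. You instead start from a general rank-one $M=\langle\psi,\cdot\rangle\phi$, invoke the rank-one resolvent identity (equivalently, the perturbation determinant) to get the eigenvalue condition $F(\lambda)=1+\langle\psi,(A-\lambda)^{-1}\phi\rangle=0$, compute $F$ via the explicit Green's function $-z^{|m-n|+1}/(1-z^2)$, and manufacture the symbol $G=\overline{\hat\psi}\hat\phi$ so that $F\circ k_{-2,2}^{-1}$ equals a normalized Blaschke product $\tilde F$. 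The two arguments agree underneath: in the paper's setup one has $(1-z^2)F(z+z^{-1})=-\phi(z)$, so your $\tilde F$ and the paper's $\phi$ have the same zero set. What your version buys is a direct connection to the perturbation-determinant formalism used throughout the rest of the paper and some extra flexibility in the choice of $M$ (your square-root factorization of $G$ is more than is needed; taking $\psi=\delta_0$ and $\hat\phi=G$ already suffices and brings your $M$ back to the paper's form up to transposition). What the paper's version buys is economy: no Fourier transform, no resolvent kernel, and no $H^\infty$ bookkeeping beyond the $H^2$ Blaschke construction. Both the equivalence of the standing hypothesis with the Blaschke condition (via Lemma \ref{lem:ko1}) and the verification that $G\in H^\infty$ (the removable singularity at $r=0$ and the maximum principle on $\{|r|\le 1/2\}$) are correctly handled in your sketch, and the Fubini step needed to collapse $\sum_{m,n}\overline{\psi(m)}\phi(n)z^{|m-n|}$ is justified since $\psi,\phi\in\ell^2$ and $|z|<1$ make the double sum absolutely convergent.
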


Since we may choose $\lambda_k$ in Lemma \ref{fl} to be, e.g.,
$\lambda_k=k^{-(1+\delta)}i$, with
$\delta>0$ arbitrarily small, we immediately get
\begin{prop}
For any $\gamma<1$, there exists a rank-one operator $M$ such that
the eigenvalues of $B=A+M$ satisfy
$$\sum_{\lambda\in \sigma_{disc}(B)} \frac{\dist(\lambda,[-2,2])^\gamma}
{|\lambda+2|^{\frac{\gamma}{2}}|\lambda-2|^{\frac{\gamma}{2}}}=+\infty.$$
\end{prop}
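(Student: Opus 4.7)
The plan is to construct the rank-one $M$ explicitly in the form $M = \langle\cdot, f\rangle\delta_0$, by reducing the existence problem to an interpolation problem on the open unit disk $U$ and solving it via a Blaschke product. The Green's function of $A$ at $0$ has the closed form $((A - \lambda)^{-1})_{n,0} = -w^{|n|}/(w^{-1} - w)$ for $w = k(\lambda) \in U$, where $k$ is the conformal map of (\ref{eq:ff}) with $a = -2, b = 2$; this is obtained by solving the defining recurrence with the $l^2$ decay condition at $\pm\infty$. For $B = A + \langle\cdot, f\rangle\delta_0$ and $\lambda \in \mc\setminus[-2,2]$, a direct calculation shows that $\lambda \in \sigma_{disc}(B)$ iff $1 + \phi(\lambda) = 0$, where $\phi(\lambda) := \langle(A-\lambda)^{-1}\delta_0, f\rangle$. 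If $f$ is supported on $\{n \geq 0\}$, summing against the Green's function yields
\[\phi(\lambda) = -\frac{w\,\Psi(w)}{1 - w^2}, \qquad \Psi(w) := \sum_{n \geq 0}\overline{f_n}\,w^n \in H^2(U),\]
and the condition $\phi(\lambda_k) = -1$ becomes the interpolation problem $\Psi(w_k) = (1 - w_k^2)/w_k$, with $w_k := k(\lambda_k) \in U \setminus \{0\}$.

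Using the identities $\lambda \pm 2 = (w \pm 1)^2/w$ together with Lemma \ref{lem:ko1}, the hypothesis of the proposition is equivalent to the Blaschke condition $\sum_k(1 - |w_k|) < \infty$. Let $\mathcal{B}(w) = \prod_k \frac{|w_k|}{w_k}\cdot\frac{w_k - w}{1 - \overline{w_k}w}$ denote the Blaschke product with zeros $\{w_k\}$; it belongs to $H^\infty(U)$, and $\mathcal{B}(0) = \prod_k|w_k| > 0$ since each $w_k \neq 0$. Setting $h := \mathcal{B}/\mathcal{B}(0) \in H^\infty(U)$, we have $h(w_k) = 0$ and $h(0) = 1$, so defining
\[\Psi(w) := \frac{1 - w^2 - h(w)}{w}\]
makes the numerator vanish at $w = 0$ (hence $\Psi \in H^\infty(U) \subset H^2(U)$), and $h(w_k) = 0$ gives $\Psi(w_k) = (1 - w_k^2)/w_k$ as required.

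Finally, reading off the Taylor coefficients $\overline{f_n}$ of $\Psi$ recovers an $f \in l^2(\mz)$ supported on $\{n \geq 0\}$, and $M := \langle\cdot, f\rangle\delta_0$ is the desired rank-one operator: by construction $\phi(\lambda_k) = -1$, so $\{\lambda_k\} \subset \sigma_p(A + M)$, and compactness of $M$ gives $\sigma_{ess}(A+M) = [-2,2]$, hence $\{\lambda_k\} \subset \sigma_{disc}(B)$ for $B := A + M$. The key observation enabling the construction is that the target values $(1 - w_k^2)/w_k$ have the form $P(w_k)/w_k$ for a polynomial $P$ with $P(0) = 1$, which makes the interpolation problem solvable by the elementary trick $\Psi = (P - h)/w$; no general $H^p$ interpolation theorem is needed, and the only substantive calculations are the derivation of the Green's function formula and the translation of the hypothesis to the Blaschke condition.
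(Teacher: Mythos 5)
Your construction is sound, and it reproduces in somewhat different language the paper's argument for Proposition \ref{fl}: your resolvent computation $((A-\lambda)^{-1}\delta_0)(n)=w^{|n|}/(w-w^{-1})$ together with the rank-one Birman--Schwinger condition $1+\langle(A-\lambda)^{-1}\delta_0,f\rangle=0$ is algebraically identical to the paper's ansatz $u_z(k)=z^{|k|}$ and the resulting condition $\phi(z)=0$, once one restricts the perturbation to be supported on $\{n\ge0\}$ as you do (i.e.\ $\alpha_j=0$ for $j<0$, which the paper also does). Your explicit Blaschke product is a more concrete realization of the function $g\in H^\infty(U)$ the paper obtains by citing Rudin, and the normalization $h=\mathcal B/\mathcal B(0)$ plays the role of the paper's normalization $g(0)=-1$. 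The translation $\sum(1-|w_k|)<\infty\iff\sum\dist(\lambda_k,[-2,2])/|\lambda_k^2-4|^{1/2}<\infty$ via $\lambda\pm2=(w\pm1)^2/w$ and Lemma \ref{lem:ko1} is also exactly the paper's step (\ref{fs}).

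There is, however, a genuine gap: as written you have proved Proposition \ref{fl}, not the statement you were asked to prove. The statement has no hypothesis about a given sequence; it asserts that \emph{for every} $\gamma<1$ one can produce a rank-one $M$ so that the $\gamma$-weighted sum over $\sigma_{disc}(B)$ diverges. Your proof never chooses a sequence $\{\lambda_k\}$, never mentions $\gamma$, and even refers to ``the hypothesis of the proposition'' being ``equivalent to the Blaschke condition,'' which is the hypothesis of Proposition \ref{fl}, not of the statement in question. To close the gap you must exhibit, for each fixed $\gamma<1$, a concrete $\{\lambda_k\}$ (equivalently $\{w_k\}$) with $\sum(1-|w_k|)<\infty$ but $\sum\dist(\lambda_k,[-2,2])^\gamma/|\lambda_k^2-4|^{\gamma/2}=\infty$, and then apply your construction to it. The paper does this with $\lambda_k=k^{-(1+\delta)}i$ and $0<\delta<1/\gamma-1$, for which the term of index $k$ is $\asymp k^{-(1+\delta)}$ in the $\gamma=1$ sum (convergent) but $\asymp k^{-\gamma(1+\delta)}$ in the $\gamma$-sum (divergent since $\gamma(1+\delta)\le1$); you would also want to note that these $\lambda_k$ are distinct and lie in $\mc\setminus[-2,2]$ so that the corresponding $w_k\in U\setminus\{0\}$ are distinct, which your Blaschke construction tacitly needs.
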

Since a rank-one perturbation belongs to all Schatten classes $\mathbf{S}_p$,
$p>0$, this shows that there is no hope to obtain the results of
Theorem \ref{thm:kd1} and Theorem \ref{thm:aa2} for $\gamma<1$, under an assumption of
the form $B-A\in \mathbf{S}_p$.\\[4pt]
We also note that the above propositions demonstrate a striking
difference between the selfadjoint and non-selfadjoint cases. Given
a selfadjoint operator with no eigenvalues, it is well-known that a
selfadjoint rank-one perturbation of this operator can have at most
one eigenvalue outside of its essential spectrum. Here we see that a
non-selfadjoint rank-one perturbation of a selfadjoint operator can
give birth to infinitely many eigenvalues.

\begin{proof}[Proof of Proposition \ref{fl}]
The rank-one perturbation $M$ is defined by:
\[ Mu=\Big[\sum_{j=-\infty}^\infty \alpha_j u(j) \Big]\delta_0, \quad u \in l^2(\mz),\]
where $\alpha_j$ are
to be determined below. For $M$ to be bounded, we need to
assume that
\begin{equation}\label{as}
\sum_{j=-\infty}^\infty |\alpha_j|^2<\infty.
\end{equation}
We now look for eigenvectors $u_z\in l^2(\mz)$ of $B=A+M$ of the
form
$$u_z(k)=z^{|k|},$$
with $|z|<1$. Note that
\begin{equation}\label{c1}|k|\geq 1\;\Rightarrow\;(B
u_z)(k)=z^{|k|}(z^{-1}+z)\end{equation}
\begin{equation}\label{c3}(Bu_z)(0)=2z+\sum_{j=-\infty}^\infty \alpha_j z^{|j|}=
\alpha_0+(\alpha_1+\alpha_{-1}+2)z+\sum_{j=2}^\infty
(\alpha_j+\alpha_{-j}) z^{j}.\end{equation} By (\ref{c1}), we see
that if $u_z$ is an eigenvector then the corresponding eigenvalue is
$\lambda=z+z^{-1}$. From (\ref{c3}) we then get that a necessary and
sufficient condition for $u_z$ to be an eigenvector is that
$$\alpha_0+(\alpha_1+\alpha_{-1}+2)z+\sum_{j=2}^\infty (\alpha_j+\alpha_{-j}) z^{j}=\lambda =z+z^{-1},$$
which we can write as $\phi(z)=0$ where $\phi(z)$ is defined by
\begin{equation}\label{eq:dd}
\phi(z)=-1+\alpha_0 z+(\alpha_1+\alpha_{-1}+1)z^2+\sum_{j=3}^\infty (\alpha_{j-1}+\alpha_{-j+1}) z^{j}.
\end{equation}
Thus the numbers of the form $\lambda=z+z^{-1}$, where $z$ are the
zeroes of $\phi$ in $U$, are
eigenvalues of $B$. Note that by assumption (\ref{as}),
$\phi(z)\in H^2(U)$.\\[4pt]
Let $\{ \lambda_k \} \subset \mc \setminus [-2,2]$ be any sequence that satisfies
\begin{equation}\label{eq:tz}
  \sum_{k=1}^\infty \frac{ \dist(\lambda_k, [-2,2])}{ |\lambda_k^2-4|^{1/2}} < \infty.
\end{equation}
In the following, we will select a specific sequence $\{\alpha_j\}$ such that $\{\lambda_k\} \subset \sigma_{disc}(B)$, where $B=B(\{ \alpha_j\})$ as defined above. To this end, we define the sequence $\{ z_k \}  \subset U \setminus \{0\}$ by
\[ \lambda_k = z_k + z_k^{-1}.\]
As in the proof of Theorem \ref{thm:aa2} one can use Lemma \ref{lem:ko1} to  check that condition (\ref{eq:tz}) on $\lambda_k$ is equivalent to
\begin{equation}\label{fs}\sum_{k=1}^\infty(1-|z_k|)<\infty.\end{equation}
By a well-known result from complex analysis, see e.g. Rudin
\cite[page 310]{Rudin}, (\ref{fs}) implies that one can construct a
function $g\in H^2(U)$ (in fact even $g\in
H^\infty(U)$) whose zeroes are $\{z_k\}$.\\[4pt]
We can normalize $g$ so that
$g(0)=-1$. Denoting the Taylor expansion of $g$ by
$$g(z)=-1+\sum_{j=1}^\infty \beta_j z^j,$$
we can choose $\alpha_0=\beta_1$, $\alpha_1=\beta_2-1$,
$\alpha_j=\beta_{j+1}$ for $j\geq 2$ and $\alpha_j=0$ for $j<0$, so
that from (\ref{eq:dd}) we obtain $\phi=g$. From the considerations above, this implies that
$\lambda_k=z_k+z_k^{-1}$ are eigenvalues of $B$. We  have thus
proven Proposition \ref{fl}.
\end{proof}

\section{Appendix}
\begin{proof}[Proof of Lemma \ref{lem:ko1}]
For $w=\frac 1 2 \left( z + z^{-1} \right)$ we define
\[ Z_1=\{ z : \Re w \leq -1\}, \quad Z_2= \{ z : \Re w \geq 1\}, \quad Z_3= \{ z : |\Re w| < 1\}\]
where $\Re w = \frac {\Re z}{2} \left( \frac{1+|z|^2}{|z|^2} \right)$.
Then
\begin{eqnarray}
  \dist(w,[-1,1]) &=& \left\{
    \begin{array}{cl}
      |w+1|= \frac 1 2  \frac{|1+z|^2}{|z|} & , z \in Z_1 \\[2pt]
      |w-1|= \frac 1 2  \frac{|1-z|^2}{|z|} & , z \in Z_2 \\[2pt]
      |\Im w|= \frac{|\Im z|}{2} \frac{1-|z|^2}{|z|^2} & , z \in Z_3.
    \end{array}\right. \label{eq:di}
\end{eqnarray}
We first show that for $z \in Z_3$ the following holds
\begin{equation}\label{eq:ay}
  \frac{\sqrt{2}}{4} \frac{|z^2-1|(1-|z|)}{|z|} \leq \dist(w,[-1,1]) \leq \frac{1+\sqrt{2}}{4} \frac{|z^2-1|(1-|z|)}{|z|}.
\end{equation}
With (\ref{eq:di}) this is equivalent to
\begin{equation}\label{eq:aax}
\frac 1 {\sqrt{2}} \leq   {|\Im z|} \frac{1+|z|}{|z||z^2-1|} \leq \frac{1+\sqrt{2}}{2}.
\end{equation}
Switching to polar coordinates we see that  $re^{i \theta} \in Z_3$ if $\cos^2(\theta) < 4 \frac {r^2} {(1+r^2)^2}$ and (\ref{eq:aax}) can be rewritten as follows
\begin{equation}\label{eq:as1}
\frac 1 {\sqrt{2}} \leq  \frac{(1+r)\sqrt{1-\cos^2(\theta)}}{\sqrt{(1+r^2)^2-4{r^2}\cos^2(\theta)}} \leq \frac{1+\sqrt{2}}{2}.
\end{equation}
For $x=\cos^2(\theta)$ and fixed $r$ we define
\begin{equation*}\label{eq:kl2}
f(x)= \frac{{1-x}}{{(1+r^2)^2-4{r^2}x}} \quad , 0 \leq x < 4 \frac {r^2} {(1+r^2)^2}.
\end{equation*}
It is easy to see that $f$ is monotonically decreasing. We thus obtain
\begin{equation*}\label{eq:ad}
\frac{1}{1+6r^2+r^4}= f\left( 4 \frac {r^2} {(1+r^2)^2} \right) \leq  f(x) \leq f(0) = \frac{1}{(1+r^2)^2}.
\end{equation*}
The last chain of inequalities implies  the validity of (\ref{eq:as1}) and (\ref{eq:aax}) since
\[ \sup_{r \in [0,1]} \frac{1+r}{1+r^2}=\frac{1+\sqrt{2}}{2} \quad \text{and} \quad \inf_{r \in [0,1]} \frac{1+r}{\sqrt{1+6r^2+r^4}}= \frac 1 {\sqrt{2}}. \]
Next, we show that for  $z \in Z_1 \cup Z_2$
\begin{equation*}\label{eq:az}
\frac 1 4 \frac{|z^2-1|(1-|z|)}{|z|} \leq \dist(w,[-1,1]) \leq \frac{1+\sqrt{2}}{4} \frac{|z^2-1|(1-|z|)}{|z|}.
\end{equation*}
By symmetry, it is sufficient to show it for $z \in Z_1$, i.e. we have to show
\begin{equation}\label{eq:rt}
  \frac 1 4 \frac{|z^2-1|(1-|z|)}{|z|} \leq \frac 1 2 \frac{|z+1|^2}{|z|} \leq \frac{1+\sqrt{2}}{4} \frac{|z^2-1|(1-|z|)}{|z|} \quad ,z \in Z_1.
\end{equation}
In polar coordinates this is equivalent to
\begin{equation}\label{eq:wk}
  \frac 1 2  \leq  \frac{1}{1-r} \sqrt{ \frac{r^2+1+2r\cos(\theta)}{r^2+1-2r\cos(\theta)}} \leq \frac{1+\sqrt{2}}{2}
\end{equation}
for $\cos(\theta) \leq -2 \frac {r} {1+r^2}$. For $y=\cos(\theta)$ and fixed $r$ we define
\begin{equation}\label{eq:kj}
  q(y)=\frac{r^2+1+2ry}{r^2+1-2ry} \quad, -1 \leq y \leq -2 \frac {r} {1+r^2}.
\end{equation}
A short calculation shows that $q$ is monotonically increasing and we obtain that
\begin{equation}\label{eq:kl}
  \left( \frac{ 1-r}{1+r} \right)^2= q(-1) \leq q(y) \leq q \left( -2 \frac {r} {1+r^2} \right) =  \frac{(1-r^2)^2}{1+6r^2+r^4} .
\end{equation}
(\ref{eq:kj}) and (\ref{eq:kl}) imply the validity of (\ref{eq:wk}) and (\ref{eq:rt}) since
\begin{equation}
  \inf_{r \in [0,1]} \frac{1}{1+r} = \frac 1 2 \quad \text{ and } \quad \sup_{r \in [0,1]}  \frac{1+r}{\sqrt{1+6r^2+r^4}} \leq \frac{1+\sqrt{2}}{2}.
\end{equation}
This concludes the proof of the lemma.
\end{proof}

\end{document}